\documentclass[12pt,oneside,reqno,fleqn]{amsart}
\usepackage{amssymb,latexsym}
\usepackage{a4wide}
\usepackage[all]{xy}

\usepackage{tikz}
\newtheorem{theorem}{Theorem}[section]

\newtheorem{corollary}[theorem]{Corollary}
\newtheorem{lemma}[theorem]{Lemma}

\newtheorem{proposition}[theorem]{Proposition}

\newtheorem{remark}[theorem]{Remark}
\newtheorem{example}[theorem]{Example}

\newtheorem{mob}[theorem]{M\"obius Invesion Formula}


\numberwithin{equation}{section}

\begin{document}

\title[Combinatorial Properties of primitive words with Non-primitive Product]{Combinatorial Properties of Primitive words with Non-primitive Product}
\author{Othman Echi \and Adel Khalfallah \and Dhaker Kroumi}

\address{Department of Mathematics and Statistics, King Fahd University of Petroleum and
	Minerals, Dhahran 31261, Saudi Arabia}
\email{othechi@yahoo.com}

\email{a$_{-}$khalfallah@yahoo.fr}

\email{kroumidhaker@yahoo.com}

\begin{abstract}
Let $\mathcal{A}$ be an alphabet of size $n\ge 2$. In this paper, we give a complete description of primitive words  $p\neq q$ over an alphabet $\mathcal{A}$ of size $n\geq2$ such that $pq$ is non-primitive and $|p|=2|q|$. In particular, if $l$ is s a positive integer, we count the cardinality of the set $\mathcal{E}(l,\mathcal{A})$ of all couples $(p,q)$ of primitive words such that $|p|=2|q|=2l$ and $pq$ is non-primitive. Then we give a combinatorial formula for this cardinality and its asymptotic behavior, as $l$ or $n$ goes to infinity.
\end{abstract}

\subjclass[2010]{Primary: 68R15,  Secondary:  68Q45}

\keywords{Primitive words, Primitive root, M\"obius inversion formula}

\maketitle

\section{Introduction}

Combinatorics on words is actually an area of research focusing on combinatorial properties of words applied to formal languages. The natural environment of a word is a free semigroup. This field was the main topic of a series of books, under the collective nom de plume ``Lothaire”(see \cite{Lothaire,Lothaire2,Lothaire3}).

This field has an important role in several mathematical research areas as well as theoretical computer science  \cite{Choffrut-Karhumaki,Domosi-H rvath-2005,Domosi-Ito,Shyr-Book}. Fascinating studies connecting theory of music and combinatorics on words are also available in the literature (see for instance \cite{Clampitt}).

The present paper deals with primitive words over a nontrivial alphabet $\mathcal{A}$ (having at least two letters). The empty word over $\mathcal{A}$ will be denoted by $\varepsilon$, $\mathcal{A}^\ast$ is the set of all words over $\mathcal{A}$ and $\mathcal{A}^+$ is the set of all nonempty words over $\mathcal{A}$. A nonempty word $u$ is said to be primitive over $\mathcal{A}$, if it is not a proper power of another word (that is, if $u=v^m$ then $m=1$).  We denote by $\mathbf{Q}(\mathcal{A})$ the set of all primitive words over $\mathcal{A}$ and $\mathbf{Q}_l(\mathcal{A})$  the set of all primitive words of length $l$. Any nonempty word $u$ can be written in a unique way as a power of a primitive word, called the primitive root of $u$, denoted by $\sqrt{u}$.

 Primitive words play a crucial role in algebraic coding theory and the theory of formal languages \cite{Lothaire} and \cite{Shyr-Book}.

Whether $\mathbf{Q}(\mathcal{A})$ is a context-free language or not is a well-known long-standing open problem posed by D\"om\"osi, Horv\'ath and Ito  in  \cite{Domo-Horv-Ito} and \cite{Domosi-Ito1}. This problem was the origin of most of the combinatorial studies of primitive words.

In \cite{Reis-Shyr}, Reis and Shyr have proved  that every nonempty word which is  not a power of a letter is a product of two primitive words. So, one may think that $\mathbf{Q}(\mathcal{A})$ is ``very large" in some sense; in fact the natural density of the language of primitive words is $1$ (see \cite{Ryoma-Density}).

For  $u\in \mathcal{A} ^+$, we let
$$u^+:=\{u^n: n \textrm{ is a positive integer}\}.$$
One of the classical results about primitive words is Shyr-Yu Theorem \cite{Shyr-Yu}. It states that if $p$ and $q$ are distinct primitive words, then the language $p^+q^+$ contains at most one non-primitive word of the form $pq^m$ or $p^mq$, as $p^nq^m$ is primitive for all $n,m\geq 2$ \cite{LySch}.

In \cite{Shyr-Yu}, Shyr-Yu gave a necessary condition in order that product $pq^m$ is a $k$-power of a primitive word, with $m,k\geq 2$. To the best of our knowledge, no information about the case $m=1$ has been provided.

In \cite{Echi-RAIRO2}, necessary and sufficient conditions are provided to get $pq^m\in \mathbf{Q}^{(k)}(\mathcal{A}):=\{u^{k}:u\in\mathbf{Q}(\mathcal{A})\}$, for $m\geq 1$ and $k\geq 2$, but the expression of $p$ was not explicit.\\

The aim of this paper is to give a characterization of  primitive words $p$ and $q$ such that $pq$ is non-primitive and $|p|=2|q|$.

More precisely, the present paper is organized as follows. In Section 2, we collect necessary material useful to establish our main results. In Section~\ref{sec3}, we provide a full description of primitive words $p,q$ such that $|p|=2|q|$ and $pq$ is non-primitive. Given a positive integer $l$, we split the set $$\mathcal{E}(\mathcal{A},l)=\left\{ (p,q)\in \mathbf{Q}(\mathcal{A})^2 \colon |p|=2|q|=2l \mbox{ and } pq \not \in \mathbf{Q}(\mathcal{A}) \right\}, $$ into two disjoint subsets $\mathcal{E}_1(\mathcal{A},l)$ and $\mathcal{E}_2(\mathcal{A},l)$, according to $|\sqrt{pq}|>|q|$ and  $|\sqrt{pq}|<|q|$, respectively. The cardinalities $\varepsilon_1(n,l)$ and $\varepsilon_2(n,l)$ of these subsets are discussed. Finally, in  Section~\ref{sec4}, the asymptotic behaviors of $\varepsilon_1(n,l)$ and  $\varepsilon_2(n,l)$  are given, as $n$ or $l$ goes to $\infty$.

\section{Preliminaries}\label{sec2}For any integer $k\geq 2$, we denote by
$$\mathbf{Q}^{(k)}(\mathcal{A})=\{p^{k}:\, p\in\mathbf{Q}(\mathcal{A})\}.$$
For two words $u$ and $v$, $u$ is said to be a prefix (resp., a suffix)
of $v$ if there exists a word $x$ (resp., a word $y$) such that $v=ux$ (resp. $v=xu$).

Here, we present some well-known results useful in the sequel. In all what follows, $\mathcal{A}$ will denote an alphabet of size $n\geq 2$.

\begin{lemma}[\cite{LySch}]\label{Primitiveroot} Let $u,v$ be two nonempty words over $\mathcal{A}$. Then the following properties hold.
\begin{enumerate}
  \item $uv=vu$ if and only if there exists a word $w$ such that $u,v\in w^{+}$.
  \item There exists a unique primitive word
$\sqrt{u}$, called \emph{the primitive root} of $u$, and a unique positive
integer $\mathfrak{e}$, called \emph{the exponent} of $u$,
such that $u = \sqrt{u}^{\,  \mathfrak{e}}$.
\end{enumerate}
\end{lemma}

The following lemma is a classical result in combinatorics on words, originally due to Lyndon-Sch\"utzenberger \cite{LySch} (see also \cite{Cast-Rest-Fici,Lothaire,Restivo}). This lemma will play a crucial role in the proof of our main result.

\begin{lemma}[\cite{LySch}]\label{LySch}Let $t,v$ be two distinct nonempty words over $\mathcal{A}$ such that $tu=uv$. Then there exist a unique pair of words $(p,q)$ and a unique positive integer $m$ such that $pq$ is primitive,
$t=(pq)^m$, $v=(qp)^m$, and $u=(pq)^j p$, for some integer $j\geq 0$.\end{lemma}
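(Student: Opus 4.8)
The plan is to exploit the fact that the relation $tu=uv$ is the classical \emph{conjugacy equation}: comparing lengths gives $|t|=|v|$, so $t$ and $v$ are conjugate and $u$ is the word transporting one into the other. I would first establish a coarse decomposition $t=PQ$, $v=QP$, $u=(PQ)^{j}P$ by a Euclidean-type descent on $|u|$, and then refine it through the primitive root of $t$ to produce the primitive factor $pq$; uniqueness would be read off from the primitive root via Lemma~\ref{Primitiveroot}.

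For existence of the coarse form I argue by induction on $|u|$. Since $|t|=|v|$ and $t\neq v$, the word $u$ is nonempty (otherwise $t=v$), and the same holds at every stage, because each reduced equation keeps the same pair $(t,v)$. If $|u|<|t|$, then reading off the prefix of length $|u|$ on both sides of $tu=uv$ shows that $u$ is a prefix of $t$; writing $t=uw$ with $w$ nonempty yields $v=wu$, so $(P,Q)=(u,w)$ and $j=0$ work. If $|u|\ge |t|$, the same prefix comparison shows $t$ is a prefix of $u$; writing $u=tu'$ and cancelling $t$ in $t^{2}u'=tu'v$ gives the strictly shorter instance $tu'=u'v$, to which induction applies, and one increments $j$ by one. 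This terminates and produces $t=PQ$, $v=QP$, $u=(PQ)^{j}P$ with $P,Q$ both nonempty (the terminating step forces $w\neq\varepsilon$, and $t\neq v$ forces $P\neq\varepsilon$).

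To refine, let $d=\sqrt{t}$ be the primitive root of $t$, so $t=d^{m}$ with $d$ primitive and $m\ge 1$. As $P$ is a proper prefix of $t=d^{m}$ (here $Q\neq\varepsilon$ gives $|P|<|t|$), I write $P=d^{k}p$ where $k=\lfloor |P|/|d|\rfloor$ and $p$ is the prefix of $d$ of length $|P|-k|d|<|d|$; setting $q$ by $d=pq$ and using $PQ=d^{m}$ gives $Q=q\,d^{m-k-1}$ with $m-k-1\ge 0$. One then checks directly that $pq=d$ is primitive, $t=(pq)^{m}$, $u=d^{mj+k}p=(pq)^{mj+k}p$, and, via the identity $(qp)^{m}=q(pq)^{m-1}p$, that $v=QP=q\,d^{m-1}p=(qp)^{m}$. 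This alignment step, matching $P$ against a power of the primitive root, is the part I expect to be the most delicate, since it is where the exponents $m$, $k$ and $j$ must be reconciled.

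Finally, for uniqueness, note that in any admissible decomposition both $p$ and $q$ are nonempty (if either were empty then $pq=qp$ and $t=(pq)^{m}=(qp)^{m}=v$, contradicting $t\neq v$). Hence $pq$ is a primitive word with $t=(pq)^{m}$, so by the uniqueness of the primitive root in Lemma~\ref{Primitiveroot} the word $pq$ must equal $\sqrt{t}$ and $m$ must be its exponent; both are therefore determined by $t$. Given $pq=\sqrt{t}=:d$, any two choices $p,p'$ are proper prefixes of $d$ with $u=d^{j}p=d^{j'}p'$; comparing lengths modulo $|d|$ forces $|p|=|p'|$ and $j=j'$, whence $p=p'$ and then $q=q'$. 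This yields the asserted uniqueness of $(p,q)$ and of $m$.
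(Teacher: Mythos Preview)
The paper does not supply its own proof of this lemma: it is quoted verbatim from \cite{LySch} as a classical result, with no accompanying argument. There is therefore nothing in the paper to compare your attempt against.

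On its own merits your proof is correct and is essentially the standard one. The Euclidean descent on $|u|$ is cleanly set up (your remark that $t\neq v$ forces $u\neq\varepsilon$ at every stage is exactly what makes the recursion terminate in the base case rather than in the degenerate situation $|u|=|t|$), and the refinement through the primitive root $d=\sqrt{t}$ together with the identity $(qp)^{m}=q(pq)^{m-1}p$ gives the desired form. One small point you leave implicit in the existence half: when you write $P=d^{k}p$ with $|p|<|d|$, you should remark that $p\neq\varepsilon$. This is immediate, since $p=\varepsilon$ would give $P=d^{k}$, $Q=d^{m-k}$, hence $v=QP=d^{m}=t$, contradicting $t\neq v$; but it is worth saying, as otherwise the ``primitive $pq$'' clause in the conclusion is vacuous. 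Your uniqueness argument via Lemma~\ref{Primitiveroot} and reduction modulo $|d|$ is fine; note only that the lemma as stated does not assert uniqueness of $j$, though your argument in fact delivers it.
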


\begin{lemma}[{\cite[Corollary 4]{Lent-Schuz}} and \cite{Shyr-Yu}]\label{Lent-Schuz-Shyr}
Let $u\in\mathcal{A}^{+}$ and $g,q\in \mathbf{Q}(\mathcal{A})$, with $u\notin q^+$.
If $uq^m=g^k$ for some $m,k\geq 1$, then $g\neq q$ and $\lvert g \rvert> \lvert q^{m-1} \rvert$.
\end{lemma}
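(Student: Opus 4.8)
The plan is to establish the two assertions separately, proving $g\neq q$ first since it is needed in the length estimate. For $g\neq q$, I would argue by contradiction: if $g=q$, then $uq^{m}=q^{k}$, and comparing lengths gives $|u|=(k-m)|q|$. As $u$ is nonempty this forces $k>m$, and cancelling $q^{m}$ on the right yields $u=q^{k-m}\in q^{+}$, contradicting $u\notin q^{+}$. Hence $g\neq q$.

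For the inequality $|g|>|q^{m-1}|$, the plan is again to argue by contradiction, assuming $|g|\le (m-1)|q|$. This is vacuous when $m=1$ (then $|g|>0$ is automatic), and when $k=1$ one has $|g|=|u|+m|q|>(m-1)|q|$ directly, so we may assume $m,k\ge 2$. Write $a=|g|$ and $b=|q|$. The key idea is that the suffix $q^{m}$ of $g^{k}=uq^{m}$ carries two periods: it visibly has period $b$, being a power of $q$; and since $g^{k}$ has period $a$ and $a\le (m-1)b<mb=|q^{m}|$, the factor $q^{m}$ inherits the period $a$ as well. Because $|q^{m}|=mb\ge a+b>a+b-\gcd(a,b)$, the classical theorem of Fine and Wilf applies and shows that $q^{m}$ has period $d:=\gcd(a,b)$. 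Since $d\mid b\mid mb=|q^{m}|$, this forces $q^{m}=r^{mb/d}$, where $r$ is the prefix of $q^{m}$ of length $d$.

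It then remains to rule out both values of $d$. If $d<b$, then $b/d\ge 2$, so $mb/d\ge 2$ and $q^{m}$ is a proper power whose primitive root has length at most $d<b$; but $q$ is primitive, so the primitive root of $q^{m}$ is $q$, of length $b$, contradicting the uniqueness of the primitive root in Lemma~\ref{Primitiveroot}. If instead $d=b$, then $b\mid a$, say $a=tb$ with $1\le t\le m-1$; the suffix of $g^{k}$ of length $a=tb$ is on one hand the last factor $g$, and on the other hand, being a suffix of $q^{m}$ of length $tb$, it equals $q^{t}$. Thus $g=q^{t}$, and primitivity of $g$ forces $t=1$, i.e. $g=q$, contradicting the first part. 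Either way we reach a contradiction, so $|g|>(m-1)|q|=|q^{m-1}|$.

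I expect the main obstacle to be the transfer of periodicities onto $q^{m}$ together with the bookkeeping of the Fine--Wilf hypothesis: one must verify that $a<|q^{m}|$ so that the global period $a$ of $g^{k}$ genuinely restricts to the suffix $q^{m}$, and that $mb\ge a+b-\gcd(a,b)$. The second subtle point is the case $d=\gcd(a,b)=b$, where Fine--Wilf produces nothing new and one must instead read $g=q^{t}$ directly off the suffix structure and invoke $g\neq q$; this is precisely where the first part of the lemma is used.
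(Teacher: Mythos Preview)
The paper does not supply its own proof of this lemma: it is quoted in the preliminaries as a known result from \cite{Lent-Schuz} and \cite{Shyr-Yu}, so there is nothing to compare your argument against. Your proof is nonetheless correct and self-contained. The first assertion ($g\neq q$) is handled by the obvious cancellation, and for the length bound your Fine--Wilf argument on the suffix $q^{m}$ is sound: the suffix inherits the global period $|g|$ of $g^{k}$ (any factor of a word with period $a$ has period $a$), the hypothesis $|g|\le(m-1)|q|$ gives $|q^{m}|\ge |g|+|q|$ so Fine--Wilf applies, and the two residual cases $d<|q|$ and $d=|q|$ are disposed of exactly as you describe, the second one closing the loop via $g\neq q$. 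The edge cases $m=1$ and $k=1$ are also handled correctly. The only cosmetic remark is that the paper lists \cite{Fine-Wilf} in its bibliography but does not state the theorem explicitly, so in a version meant to be inserted into this paper you would want to cite it rather than invoke it by name alone.
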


\begin{lemma}[\cite{Shyr-Yu}]\label{Shyr1}Let $p\neq q\in \mathbf{Q}(\mathcal{A})$. Then the following properties hold.

\begin{enumerate}
  \item The language $p^+q^+$ contains at most one non-primitive word.
  \item If $\lvert p \rvert=\lvert q \rvert$, then $p^{+}q^{+}\setminus\{pq\}\subseteq \mathbf{Q}(\mathcal{A})$.
\end{enumerate}
\end{lemma}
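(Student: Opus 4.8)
The plan is to first peel off all the ``interior'' products and then treat the two boundary families $pq^{m}$ and $p^{n}q$ by a single argument together with a reversal symmetry. Since $p$ and $q$ are primitive and $p\neq q$, neither is a power of the other, so $p\notin q^{+}$ and $q\notin p^{+}$; this is exactly the hypothesis needed to invoke Lemma~\ref{Lent-Schuz-Shyr}. As recalled in the Introduction, $p^{n}q^{m}$ is primitive whenever $n,m\geq 2$ (see \cite{LySch}), so every non-primitive word of $p^{+}q^{+}$ must be of the form $pq^{m}$ or $p^{n}q$. Writing $\widetilde{w}$ for the reversal of $w$, we have $\widetilde{p^{n}q^{m}}=\widetilde{q}^{\,m}\widetilde{p}^{\,n}$, primitivity is preserved under reversal, and $\widetilde{p}\neq\widetilde{q}$ are primitive; hence the family $p^{n}q$ for $(p,q)$ is carried to the family $\widetilde{q}\,\widetilde{p}^{\,n}$ for the pair $(\widetilde{q},\widetilde{p})$, i.e.\ to a $PQ^{n}$-family. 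Thus it suffices throughout to analyse words of the form $pq^{m}$, the $p^{n}q$ case following by reversal.

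For part~(2) I would assume $|p|=|q|=:\ell$ and suppose, for contradiction, that $pq^{m}=g^{k}$ is non-primitive with $m\geq 2$, $g\in\mathbf{Q}(\mathcal{A})$ and $k\geq 2$. Lemma~\ref{Lent-Schuz-Shyr} (with $u=p$) gives $|g|>(m-1)\ell$, while $k|g|=|pq^{m}|=(m+1)\ell$. Combining these, $(m+1)\ell/k>(m-1)\ell$, whence $k<(m+1)/(m-1)\leq 3$ for $m\geq 2$; therefore $k=2$, and then $|g|=(m+1)\ell/2>(m-1)\ell$ forces $m=2$, so $|g|=\tfrac{3}{2}\ell$ and $\ell$ is even. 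Writing $q=q_{1}q_{2}$ with $|q_{1}|=|q_{2}|=\ell/2$, the equality $pq^{2}=g^{2}$ with $|g|=\tfrac{3}{2}\ell$ yields, on comparing the two halves, $g=pq_{1}=q_{2}q_{1}q_{2}$, hence $p=q_{2}q_{1}$ and $q_{1}=q_{2}$; then $q=q_{1}^{2}$, contradicting the primitivity of $q$. Consequently $pq^{m}$ is primitive for all $m\geq 2$, and by the reversal reduction $p^{n}q$ is primitive for all $n\geq 2$; together with the Introduction fact this leaves $pq$ as the only possible non-primitive word, which is precisely $p^{+}q^{+}\setminus\{pq\}\subseteq\mathbf{Q}(\mathcal{A})$.

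For part~(1) it suffices, by the reversal reduction, to prove three statements: (a) at most one $m$ makes $pq^{m}$ non-primitive; (b) at most one $n$ makes $p^{n}q$ non-primitive; and (c) the two families cannot both contain a non-primitive word distinct from $pq$. Statement (b) is the reversal of (a). For (a), suppose $pq^{a}=g^{i}$ and $pq^{b}=h^{j}$ are both non-primitive with $i,j\geq 2$ and $1\leq a<b$. Lemma~\ref{Lent-Schuz-Shyr} gives $|g|>(a-1)|q|$ and $|h|>(b-1)|q|\geq(b-a)|q|$, and since $i,j\geq 2$ these bounds already confine $a$ and $b$ to a finite range. From $pq^{b}=pq^{a}q^{b-a}$ one obtains the key relation
\begin{equation*}
h^{j}=g^{i}q^{\,b-a}.
\end{equation*}
Because $(b-a)|q|<|h|$, the block $q^{b-a}$ is a proper suffix of $h$; writing $h=h_{0}q^{b-a}$ and substituting, the relation becomes $g^{i}=h_{0}(q^{\,b-a}h_{0})^{\,j-1}$. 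When $|g|$ and $|h|$ are comparable (say $|h|\leq|g|$), the word $g^{i}$, being a prefix of $h^{j}$, carries the two periods $|g|$ and $|h|$ over a range of length $i|g|\geq 2|g|$, so the theorem of Fine and Wilf forces the common period $\gcd(|g|,|h|)$; as $g$ is primitive this yields $|g|=|h|$, then $g=h$, and finally $q^{\,b-a}=g^{\,j-i}$, whence by Lemma~\ref{Primitiveroot} $q=g$ and $p=q$, a contradiction.

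The hard part will be the remaining case of (a), where $|h|$ is much larger than $|g|$ so that the Fine--Wilf overlap on the shared prefix $g^{i}$ is too short to conclude; there one must exploit the refined structure of $g^{i}=h_{0}(q^{\,b-a}h_{0})^{\,j-1}$ together with the conjugacy Lemma~\ref{LySch} (applied to the word equation obtained by cyclically rotating $pq^{a}$) to pin down $g$, $h$ and $q$ as powers of a common primitive word, again contradicting $p\neq q$. The cross-family exclusion (c) is of the same nature: it combines the prefix-periodicity coming from a non-primitive $p^{b}q$ with the suffix-periodicity coming from a non-primitive $pq^{a}$ (both bounded through Lemma~\ref{Lent-Schuz-Shyr}), and is resolved by the same circle of ideas. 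I expect this incomparable-length bookkeeping, which is the technical core of the Shyr--Yu argument, to be the only genuinely delicate step; everything else reduces to the length count of Lemma~\ref{Lent-Schuz-Shyr} and the reversal symmetry.
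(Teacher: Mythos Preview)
The paper does not prove this lemma; it is quoted from \cite{Shyr-Yu} (with a pointer to an alternative proof in \cite{Domosi-Horvath-Vuillon}), so there is no ``paper's own proof'' to compare against. What can be assessed is whether your sketch stands on its own.

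Your argument for part~(2) is clean and complete: the length count via Lemma~\ref{Lent-Schuz-Shyr} forces $k=2$, $m=2$, and the half-splitting of $q$ then yields $q=q_{1}^{2}$, a contradiction; the reversal symmetry transports this to the $p^{n}q$ side, and the Lyndon--Sch\"utzenberger fact recalled in the Introduction handles $n,m\geq 2$. That part is fine.

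Part~(1), however, is not proved. You yourself flag the gap: in step~(a), after the Fine--Wilf argument disposes of the case $|h|\leq |g|$, the remaining case $|g|<|h|$ with $j=2$ leaves the common prefix $g^{i}$ shorter than $|g|+|h|$, and no conclusion follows from Fine--Wilf alone. Your proposed remedy---``exploit the refined structure of $g^{i}=h_{0}(q^{\,b-a}h_{0})^{\,j-1}$ together with Lemma~\ref{LySch}''---is a direction, not an argument, and in the critical subcase $j=2$ that expression collapses to $g^{i}=h_{0}q^{\,b-a}h_{0}$, which does not obviously force a common primitive root. Likewise, the cross-family exclusion~(c) (a non-primitive $pq^{a}$ and a non-primitive $p^{b}q$ coexisting) is asserted to be ``of the same nature'' but is not carried out; this case is not a reversal of anything you have already done, since reversal swaps the \emph{pair} $(p,q)$, not the exponent placement within a fixed pair. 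These are exactly the places where the original Shyr--Yu proof does nontrivial work, so as it stands your write-up establishes~(2) but leaves~(1) as a plan rather than a proof.
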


It is worth noting that an alternative proof of the previous result has been given in \cite{Domosi-Horvath-Vuillon}.
In order to study local distribution of non primitive words, Shyr-Tu have established the following result.

\begin{lemma}[\cite{Shyr-Tu}]\label{Shyr-Tu}Let $u,x,y\in \mathcal{A}^{+}$ such that $|x|=|y|\leq \frac{|u|}{2}$ and $x\not=y$. Then, either $ux$ or $uy$ is a primitive word.\end{lemma}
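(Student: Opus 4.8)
The plan is to argue by contradiction. Suppose that neither $ux$ nor $uy$ is primitive. By Lemma~\ref{Primitiveroot} we may then write $ux=\alpha^{\,i}$ and $uy=\beta^{\,j}$ with $\alpha,\beta\in\mathbf{Q}(\mathcal{A})$ and integers $i,j\ge 2$. Write $m=|u|$ and $s=|x|=|y|$, so that $s\le m/2$ by hypothesis, and set $L=|ux|=|uy|=m+s$, $a=|\alpha|=L/i$ and $b=|\beta|=L/j$; without loss of generality $a\le b$. Since $i,j\ge 2$, we have $a,b\le L/2=(m+s)/2\le 3m/4<m$, so $\alpha$ and $\beta$, being prefixes of $\alpha^{\,i}=ux$ and $\beta^{\,j}=uy$ of length less than $|u|$, are both prefixes of $u$.

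Next I would pass to periodicities. Let $P$ denote the longest common prefix of $ux$ and $uy$; as both words begin with $u$, we have $|P|\ge m$. Viewing $P$ as a prefix of $\alpha^{\,i}$ shows that $P$ has period $a$, and viewing it as a prefix of $\beta^{\,j}$ shows that $P$ has period $b$. The crux is then to invoke the classical Fine--Wilf theorem: provided $|P|\ge a+b-\gcd(a,b)$, one concludes that $P$ has period $d:=\gcd(a,b)$. Granting this, let $\rho$ be the prefix of $P$ of length $d$; since $d\mid b$ and $b<m\le|P|$, the length-$b$ prefix of $P$ equals $\rho^{\,b/d}$. But that prefix is exactly $\beta$, so $\beta=\rho^{\,b/d}$, and primitivity of $\beta$ forces $b/d=1$, i.e.\ $b=\gcd(a,b)$. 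With $a\le b$ this gives $a=b$, whence $\alpha=\beta$ (both being the length-$a$ prefix of $u$) and $i=L/a=L/b=j$; therefore $ux=uy$ and $x=y$, contradicting $x\ne y$.

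The one genuinely delicate point---and the step I expect to be the main obstacle---is the verification of the Fine--Wilf length hypothesis $|P|\ge a+b-\gcd(a,b)$. Since $|P|\ge m$ and the assumption $s\le m/2$ yields $m\ge\tfrac23 L$, it suffices to prove that $a+b-\gcd(a,b)\le\tfrac23 L$ whenever $a\le b$ are divisors of $L$ with $b\le L/2$. I would dispose of this by a short case analysis on $b$. If $b\le L/3$, then $a+b-\gcd(a,b)\le a+b\le\tfrac23 L$ at once. Otherwise $L/3<b\le L/2$ forces $b=L/2$, the only divisor of $L$ in this range; if in addition $a<b$, then $a\mid L$ with $a<L/2$ gives $a\le L/3$, and a direct estimate of $\gcd(a,L/2)$ yields $a+b-\gcd(a,b)\le\tfrac23 L$, the bound being attained exactly in the extremal case $a=L/3$, $b=L/2$. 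The remaining possibility $a=b=L/2$ is already subsumed in the argument above, since it immediately gives $\alpha=\beta$ and hence $x=y$. This completes the contradiction and shows that at least one of $ux$, $uy$ is primitive.
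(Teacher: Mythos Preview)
The paper does not prove this lemma; it is merely quoted from \cite{Shyr-Tu} as a known result. So there is no ``paper's proof'' to compare against, and your argument must stand on its own.

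Your Fine--Wilf approach is correct and complete in outline. The one step you leave as ``a direct estimate of $\gcd(a,L/2)$'' deserves a line of justification, since it is the crux of the case $b=L/2$, $a\le L/3$. Here is the missing observation: since $a\mid L$ and $(L/2)\mid L$, the least common multiple $\operatorname{lcm}(a,L/2)$ also divides $L$, hence $\dfrac{a\cdot(L/2)}{\gcd(a,L/2)}\le L$, i.e.\ $\gcd(a,L/2)\ge a/2$. Therefore
\[
a+b-\gcd(a,b)\;=\;a+\tfrac{L}{2}-\gcd\!\bigl(a,\tfrac{L}{2}\bigr)\;\le\;\tfrac{a}{2}+\tfrac{L}{2}\;\le\;\tfrac{L}{6}+\tfrac{L}{2}\;=\;\tfrac{2L}{3}\;\le\;m\;\le\;|P|,
\]
using $a\le L/3$ and $m\ge\tfrac{2}{3}L$. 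With this filled in, every case of the Fine--Wilf hypothesis is verified, and the rest of your argument (forcing $d=\gcd(a,b)=b$, hence $a=b$, $\alpha=\beta$, and $x=y$) goes through exactly as you wrote. The proof is sound.
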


As a direct consequence of \cite[Theorems 13, 14]{Domosi-Horvath-2006}, we obtain the following.

\begin{lemma}[Prefix-Suffix]\label{Pref-Suf}Let $q\in \mathbf{Q}(\mathcal{A})$ and $x\in \mathcal{A}^+$, with $x\neq q$. Then, the following properties hold.
\begin{enumerate}
  \item If $x$ is a prefix of $q$, then $q^kx$ is primitive for all $k\geq 2$.
  \item If $x$ is a suffix of $q$, then $xq^k$ is primitive for all $k\geq 2$.
\end{enumerate}
\end{lemma}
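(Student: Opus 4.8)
The plan is to prove assertion (1) directly and to deduce (2) from it via the mirror (reversal) anti\-automorphism $u\mapsto\tilde u$ of $\mathcal{A}^\ast$. Indeed $\widetilde{q^k x}=\tilde x\,\tilde q^{\,k}$, a word is primitive iff its reversal is, $\tilde q$ is primitive iff $q$ is, and $x$ is a prefix of $q$ iff $\tilde x$ is a suffix of $\tilde q$; hence statement (1) applied to $(\tilde q,\tilde x)$ is exactly statement (2) for $(q,x)$. So I fix $q\in\mathbf{Q}(\mathcal{A})$ and a prefix $x\neq q$ of $q$ (so $0<|x|<|q|$), and set $w:=q^kx$ with $k\geq 2$. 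Because $x$ is a prefix of $q$, the word $q^kx$ is a prefix of $q^{k+1}$; consequently $w$ admits $|q|$ as a period.

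Assume for contradiction that $w$ is not primitive. Writing $g:=\sqrt{w}$ and letting $d\geq 2$ be its exponent (Lemma~\ref{Primitiveroot}), we have $w=g^d$, so $w$ also admits the period $|g|=|w|/d\leq|w|/2$. Since $k\geq 2$ gives $|w|=k|q|+|x|>2|q|$, we get $|q|<|w|/2$ and $|g|\leq|w|/2$, hence $|q|+|g|<|w|$; the Fine--Wilf periodicity theorem then guarantees that $w$ has the period $g_0:=\gcd(|q|,|g|)$, and $g_0$ divides $|q|$. Now $q^k$ is the length\-$k|q|$ prefix of $w$ with $k|q|\geq 2|q|>g_0$, so $q^k$ inherits the period $g_0$; as $g_0$ divides $|q^k|$, the word $q^k$ is the $(k|q|/g_0)$\-th power of its length\-$g_0$ prefix $r$, whence $q=r^{\,|q|/g_0}$. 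Primitivity of $q$ forces $|q|/g_0=1$, i.e. $|q|$ divides $|g|$. But $|g|$ divides $|w|$ because $w=g^d$, so $|q|$ divides $|w|=k|q|+|x|$, forcing $|q|$ to divide $|x|$, which is impossible as $0<|x|<|q|$. This contradiction proves $w=q^kx$ primitive, and (2) follows by reversal.

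The only delicate point is the possibility of a \emph{short} primitive root, i.e. $|g|$ comparable to $|q|$. The range $k\geq 3$ can in fact be dispatched cheaply with Lemma~\ref{Lent-Schuz-Shyr}: applied to the equation it yields $|g|>(k-1)|q|$, which together with $|g|\leq(|x|+k|q|)/2$ and $|x|<|q|$ already gives $(k-2)|q|<|x|<|q|$, impossible once $k\geq 3$. For the square configuration $k=d=2$, however, the length inequalities are consistent, so $q^2x=g^2$ cannot be excluded by counting alone; it is precisely here that a genuine two\-period (Fine--Wilf type) overlap argument is required, forcing $|q|$ to be a common period and contradicting the primitivity of $q$. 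This residual square case is the main obstacle, which is why I prefer to run the periodicity argument uniformly in $k$ from the outset rather than splitting off $k\geq 3$.
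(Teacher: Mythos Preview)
Your argument is correct. The reversal reduction of (2) to (1) is standard and valid, and the core of your proof of (1)---observing that $w=q^kx$ is a prefix of $q^{k+1}$ and hence has period $|q|$, then combining this via Fine--Wilf with the period $|g|$ coming from a putative nontrivial power decomposition $w=g^d$---is clean and goes through exactly as you wrote: the key inequality $|q|+|g|<|w|$ holds because $|q|<|w|/2$ (strictly, since $|x|>0$) and $|g|\le |w|/2$, so Fine--Wilf forces $\gcd(|q|,|g|)$ to be a period of $w$, hence of $q^k$, hence of $q$, and primitivity of $q$ then yields the divisibility chain $|q|\mid|g|\mid|w|$ contradicting $0<|x|<|q|$.

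By contrast, the paper does not prove this lemma at all: it is stated as a direct consequence of Theorems~13 and~14 of D\"om\"osi--Horv\'ath \cite{Domosi-Horvath-2006}, with no argument given. Your route is therefore genuinely different and more self-contained: it needs only the Fine--Wilf theorem (already in the paper's bibliography as \cite{Fine-Wilf}) and the uniqueness of primitive roots (Lemma~\ref{Primitiveroot}), whereas the paper's treatment defers to an external source. The benefit of your approach is that it keeps the paper independent of \cite{Domosi-Horvath-2006} at this point and makes transparent why $k\ge 2$ is the right threshold (for $k=1$ the length condition for Fine--Wilf can fail). The closing paragraph about Lemma~\ref{Lent-Schuz-Shyr} handling $k\ge 3$ is a nice remark but, as you note yourself, unnecessary given the uniform periodicity argument.
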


\begin{lemma}[\cite{Chu-Shu-Ya}]\label{Chin-Acta}Let $p \neq q\in \mathbf{Q}(\mathcal{A})$ such that $|p|=r|q|$, for some integer $r\geq 2$. Then the following properties hold.
\begin{enumerate}
  \item $pq^m$ is primitive, for all $m\geq r$.
  \item $p^mq$ is primitive for all $m\geq 2$.
\end{enumerate}
\end{lemma}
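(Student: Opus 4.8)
The plan is to prove both items by contradiction, assuming in each case that the product in question equals $g^{k}$ for some primitive word $g$ and some integer $k\ge 2$, and then forcing either $p$ or $q$ to be a proper power, contrary to primitivity. The two main tools will be Lemma~\ref{Lent-Schuz-Shyr}, used to bound $|g|$ from below, and a periodicity (Fine--Wilf type) argument, used to settle the tight boundary cases; the latter is where essentially all the work lies.

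For item (1), suppose $pq^{m}=g^{k}$ with $k\ge 2$ and $m\ge r$. As $p$ is primitive and $p\neq q$ we have $p\notin q^{+}$, so Lemma~\ref{Lent-Schuz-Shyr} gives $|g|>(m-1)|q|$. Since $k|g|=|pq^{m}|=(r+m)|q|$ and $k\ge 2$, we also have $|g|\le (r+m)|q|/2$, and these two estimates combine to give $(m-1)|q|<|g|\le (r+m)|q|/2$, whence $m\le r+1$; moreover $k=(r+m)|q|/|g|<(r+m)/(m-1)\le 4$ for $m\ge r\ge 2$, so $k\in\{2,3\}$. Thus $pq^{m}$ is automatically primitive once $m\ge r+2$, and only the boundary pairs $(m,k)\in\{(r,2),(r+1,2),(2,3)\}$ remain (the last occurring only if $r=2$).

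These boundary pairs are the crux. The pair $(m,k)=(r,2)$ is immediate: then $|g|=r|q|=|p|$, so reading $g^{2}=pq^{r}$ off its two halves forces $g=p$ and $g=q^{r}$, i.e.\ $p=q^{r}$, contradicting primitivity of $p$. In each remaining pair one checks $|g|<m|q|$, so the trailing block $q^{m}$ of $g^{k}$ carries the two periods $|q|$ and $|g|$; a direct computation shows that in every such pair $|q|+|g|-\gcd(|q|,|g|)=m|q|$ exactly, so the Fine--Wilf theorem (at equality) endows $q^{m}$ with the period $d=\gcd(|q|,|g|)$. Since $d\mid|q|$ and $d<|q|$, the prefix $q$ becomes a proper power, contradicting primitivity of $q$. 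Equivalently, one matches the prefix and suffix occurrences of $g$ and applies Lemma~\ref{LySch} to extract the same proper power of $q$; I expect this equality analysis of the overlap to be the genuine obstacle, since the crude length bounds no longer separate the cases here.

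For item (2) the difficulty is that Lemma~\ref{Lent-Schuz-Shyr} is adapted to a trailing power, whereas in $p^{m}q$ the repetition sits in front. I would first pass to reversals: writing $\widetilde{w}$ for the mirror image of $w$ (which preserves length and primitivity and turns a proper power into a proper power), $p^{m}q$ is primitive iff $\widetilde{q}\,\widetilde{p}^{\,m}$ is, and the latter has the right shape with $\widetilde{q}\notin(\widetilde{p})^{+}$. Lemma~\ref{Lent-Schuz-Shyr} now gives $|g|>(m-1)|p|=(m-1)r|q|$, and the identity $k|g|=(mr+1)|q|$ with $k\ge 2$ forces $mr<2r+1$, hence $m=2$ and then $k=2$. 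Only $\widetilde{q}\,\widetilde{p}^{\,2}=g^{2}$ survives; here $|g|=(2r+1)|q|/2$, so $|q|$ is even and the trailing block $\widetilde{p}^{\,2}$ carries the periods $|p|$ and $|g|$ with $|p|+|g|-\gcd(|p|,|g|)=2|p|$ at equality. Fine--Wilf then gives $\widetilde{p}^{\,2}$ the period $\gcd(|p|,|g|)=|q|/2<|p|$, making $\widetilde{p}$ and hence $p$ a proper power, the desired contradiction.
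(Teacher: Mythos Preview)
The paper does not prove Lemma~\ref{Chin-Acta}; it is quoted verbatim from \cite{Chu-Shu-Ya} as a known result, with no argument supplied. There is therefore no in-paper proof to compare against.

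That said, your proposal is a correct, self-contained proof. For item~(1), the reduction via Lemma~\ref{Lent-Schuz-Shyr} to the pairs $(m,k)\in\{(r,2),(r+1,2),(2,3)\}$ (the last only when $r=2$) is accurate; the case $(r,2)$ collapses to $p=q^{r}$ immediately, and in the two remaining cases your verification that the suffix $q^{m}$ carries periods $|q|$ and $|g|$ with $|q|+|g|-\gcd(|q|,|g|)=m|q|$ is correct, so Fine--Wilf at equality forces $q$ to admit the proper divisor period $\gcd(|q|,|g|)$. For item~(2), the reversal trick is legitimate (reversal preserves both length and primitivity), the reduction to $m=k=2$ via Lemma~\ref{Lent-Schuz-Shyr} applied to $\widetilde{q}\,\widetilde{p}^{\,m}$ is correct, and the Fine--Wilf computation on $\widetilde{p}^{\,2}$ with periods $|p|$ and $|g|=(2r+1)|q|/2$ again lands exactly on the threshold and yields period $|q|/2$, contradicting primitivity of $p$. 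The only point worth making explicit in a final write-up is that Fine--Wilf is being invoked at equality (which is valid, the bound being $\ge$), since this is precisely where the crude length estimates stop helping and the periodicity argument becomes essential.
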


\section{Primitive Words $p\neq q$ such that $|p|=2|q|$ and $pq$ is non-primitive}\label{sec3}

We start by recalling a result giving a complete description of primitive words $p$ and $q$ such that $pq$ is non-primitive.

\begin{theorem}[\cite{Adel-Echi-Kroumi}]\label{theorem-pq}
Let $p$ and $q$ be two distinct primitive words and $k\geq 2$ be a given integer. Then, the following statements are equivalent.
\begin{enumerate}
\item $pq\in \mathbf{Q}^{(k)}(\mathcal{A})$.
\item One of the following statements holds.
\begin{enumerate}
\item $p=(xq)^{k-1}x$, with $x\in \mathcal{A}^+$ and $xq\in \mathbf{Q}(\mathcal{A})$.
\item There exist an integer $1\leq s\leq k-1$, and $\alpha,\beta\in \mathcal{A}^{+}$ such that $p=(\beta\alpha)^{k-s-1}\beta$, $q=(\alpha\beta)^s\alpha$ and  $\alpha\beta\in \mathbf{Q}(\mathcal{A})$.
\end{enumerate}
\end{enumerate}
\end{theorem}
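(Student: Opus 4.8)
The plan is to prove the two implications $(2)\Rightarrow(1)$ and $(1)\Rightarrow(2)$ separately, disposing of the backward implication first since it is a direct computation. If part (a) of $(2)$ holds, then $pq=(xq)^{k-1}xq=(xq)^{k}$, and since $xq$ is primitive we get $pq\in\mathbf{Q}^{(k)}(\mathcal{A})$. If part (b) holds, the identity $\beta(\alpha\beta)^{s}=(\beta\alpha)^{s}\beta$ rearranges $pq=(\beta\alpha)^{k-s-1}\beta(\alpha\beta)^{s}\alpha$ into $(\beta\alpha)^{k-s-1}(\beta\alpha)^{s+1}=(\beta\alpha)^{k}$; as $\alpha\beta$ is primitive so is its conjugate $\beta\alpha$, whence again $pq\in\mathbf{Q}^{(k)}(\mathcal{A})$. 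Thus the whole content lies in the forward implication.

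For $(1)\Rightarrow(2)$, I would write $g=\sqrt{pq}$, so that by Lemma~\ref{Primitiveroot} we have $pq=g^{k}$ with $g$ primitive and $k$ its exponent, and set $\ell=|g|$. The key object is the position $|p|=k\ell-|q|$ at which the factorization into $p$ and $q$ cuts the periodic word $g^{k}$. First I would record an exclusion forced by the hypotheses: $\ell$ cannot divide $|q|$. Indeed, if $|q|=m\ell$ then $q$, being the length-$m\ell$ suffix of $g^{k}$, equals $g^{m}$; primitivity of $q$ gives $m=1$, i.e. $q=g$ and $p=g^{k-1}$, and then primitivity of $p$ forces $k=2$, so $p=g=q$, contradicting $p\neq q$. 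In particular $\ell\neq|q|$, and the cut point falls strictly inside a single copy of $g$.

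Next I would split according to the sign of $\ell-|q|$. If $\ell>|q|$, then $q$ is a proper suffix of the last copy of $g$; writing $g=xq$ with $x\in\mathcal{A}^{+}$ gives $pq=(xq)^{k}$, hence $p=(xq)^{k-1}x$ with $xq=g$ primitive, which is part (a). If $\ell<|q|$, write $|q|=s\ell+r$; the exclusion above guarantees $1\le r\le\ell-1$, while $|q|>\ell$ gives $s\ge1$ and $|p|\ge1$ (equivalently $|q|<k\ell$) gives $s\le k-1$, so $1\le s\le k-1$. The cut then lands at offset $\ell-r$ inside the $(k-s)$-th copy of $g$. Letting $\beta$ and $\alpha$ be, respectively, the length-$(\ell-r)$ prefix and the length-$r$ suffix of $g$, so that $g=\beta\alpha$ with $\alpha,\beta\in\mathcal{A}^{+}$, reading off $g^{k}$ on either side of the cut yields $p=g^{k-s-1}\beta=(\beta\alpha)^{k-s-1}\beta$ and $q=\alpha g^{s}=(\alpha\beta)^{s}\alpha$, with $\alpha\beta$ primitive as a conjugate of $g$; this is exactly part (b).

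I expect the only delicate point to be the bookkeeping in the case $\ell<|q|$: one must verify that the cut point lies strictly inside a single copy of $g$ (so that $\alpha,\beta$ are genuinely nonempty), pin down the index $k-s$ of that copy, and confirm the range $1\le s\le k-1$ for the exponent. Everything else is either the immediate backward computation or the primitivity argument excluding $\ell\mid|q|$. It is worth noting that no appeal to the Lyndon--Sch\"utzenberger machinery of Lemma~\ref{LySch} seems necessary for this statement: the periodicity of $g^{k}$ together with the uniqueness of the primitive root suffices.
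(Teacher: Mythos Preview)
The paper does not contain a proof of this theorem: it is quoted from \cite{Adel-Echi-Kroumi} and used as background (note the citation in the theorem header and the absence of any proof environment following it). So there is no proof in the present paper against which to compare your proposal.

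That said, your argument is correct and self-contained. The backward direction is the obvious computation you give. For the forward direction, your key observation that $\ell=|g|$ cannot divide $|q|$ (else $q=g^{m}$ forces $q=g$, then $p=g^{k-1}$ forces $p=g=q$) cleanly excludes the degenerate case; the dichotomy $\ell>|q|$ versus $\ell<|q|$ then reads off parts (a) and (b) directly from the periodic structure of $g^{k}$. The bookkeeping you flag is handled: writing $|q|=s\ell+r$ with $1\le r\le\ell-1$ and $s\ge1$, the bound $|q|<k\ell$ indeed gives $s\le k-1$, and $|\beta|=\ell-r\ge1$, $|\alpha|=r\ge1$ ensure both words are nonempty. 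Your remark that Lemma~\ref{LySch} is not needed is also accurate: only the uniqueness of the primitive root (Lemma~\ref{Primitiveroot}) and conjugacy-preserves-primitivity are used. This is presumably the same elementary argument as in \cite{Adel-Echi-Kroumi}, since there is really only one natural way to locate the cut inside $g^{k}$.
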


\begin{remark}\label{remsqrt}\rm Conditions $(a)$ and $(b)$ in Theorem \ref{theorem-pq} correspond to $|\sqrt{pq}|>|q|$ and $|\sqrt{pq}|<|q|$, respectively.
\end{remark}

Let $p\neq q\in\mathbf{Q}(\mathcal{A})$. Then, following Lemma \ref{Shyr1}, the language $p^+q^+$ contains at most one non-primitive word.

We will focus on the class
$$\mathcal{C}:=\left\{(p,q)\in \mathbf{Q}(\mathcal{A})\times \mathbf{Q}(\mathcal{A})\colon pq\notin\mathbf{Q}(\mathcal{A}), |p|=2|q|\right\}.$$
First, let us identify all the elements of $\mathcal{C}$.

\begin{theorem}\label{p=2q}
Let $p$ and $q$ be two primitive words. Then the following statements are equivalent.
\begin{enumerate}
\item $|p|=2|q|$ and $pq \not \in \mathbf{Q}(\mathcal{A})$.
\item  One of the following mutually exclusive properties holds.
\begin{itemize}
    \item[$(i)$] $p=xqx$, for some $x\in \mathcal{A}^+$, with $xq \in \mathbf{Q}(\mathcal{A})$ and $|q|=2|x|$.
    \item[$(ii$)] There exist $\alpha, \beta \in \mathcal{A}^+$ and an integer $s \geq 1$ such that $\alpha \beta \in \mathbf{Q}(\mathcal{A})$,
    $$p=(\beta \alpha)^{2s}\beta\; \mbox{and} \; q=(\alpha\beta)^s \alpha, \mbox{ with } |\beta|=2|\alpha|.$$
    \item[$(iii)$] There exist $\alpha, \beta \in \mathcal{A}^+$ and an integer $s\geq 1$ such that $\alpha\beta \in \mathbf{Q}(\mathcal{A})$,
     $$p=(\beta \alpha)^{2s+1}\beta\; \mbox{and} \;q=(\alpha\beta)^s \alpha, \mbox{ with } |\alpha|=2|\beta|.$$
    \end{itemize}
\end{enumerate}
\end{theorem}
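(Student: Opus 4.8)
The plan is to derive everything from Theorem \ref{theorem-pq} by specializing its description to the length constraint $|p|=2|q|$. First I would record two reductions. A nonempty word $pq$ fails to be primitive exactly when $pq\in\mathbf{Q}^{(k)}(\mathcal{A})$ for some integer $k\ge 2$; by uniqueness of the primitive root (Lemma \ref{Primitiveroot}) the only such $k$ is the exponent $\mathfrak{e}$ of $pq$, so it suffices to apply Theorem \ref{theorem-pq} with $k=\mathfrak{e}$. Moreover $|p|=2|q|$ makes $p\neq q$ automatic, so the distinctness hypothesis of Theorem \ref{theorem-pq} comes for free.

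For the implication $(1)\Rightarrow(2)$ I would run the two alternatives of Theorem \ref{theorem-pq}. In alternative (a), $p=(xq)^{k-1}x$; writing $c=|x|$ and $d=|q|$, the equation $|p|=2|q|$ becomes $(k-1)(c+d)=2d-c$. Since $0<2d-c<2(c+d)$, this forces $k-1<2$, hence $k=2$ and then $d=2c$, giving $p=xqx$ with $|q|=2|x|$, i.e. case $(i)$. In alternative (b), $p=(\beta\alpha)^{k-s-1}\beta$ and $q=(\alpha\beta)^{s}\alpha$; writing $a=|\alpha|$ and $b=|\beta|$, the equation rearranges to $(k-3s-1)(a+b)=2a-b$. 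The bounds $-(a+b)<2a-b<2(a+b)$ pin the integer $k-3s-1$ to the set $\{0,1\}$: the value $0$ gives $b=2a$ and $k=3s+1$, so $p=(\beta\alpha)^{2s}\beta$, which is case $(ii)$; the value $1$ gives $a=2b$ and $k=3s+2$, so $p=(\beta\alpha)^{2s+1}\beta$, which is case $(iii)$. The only genuine computation here is this elementary Diophantine bounding, which is routine.

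For $(2)\Rightarrow(1)$ I would recognize each of $(i),(ii),(iii)$ as an instance of Theorem \ref{theorem-pq}$(2)$ for a specific exponent: $(i)$ is alternative (a) with $k=2$, while $(ii)$ and $(iii)$ are alternative (b) with $k=3s+1$ and $k=3s+2$ respectively (one checks $1\le s\le k-1$ in each). Theorem \ref{theorem-pq} then yields $pq\in\mathbf{Q}^{(k)}(\mathcal{A})$, hence $pq\notin\mathbf{Q}(\mathcal{A})$, and a one-line length count gives $|p|=2|q|$ in each case, using for instance $pq=(\beta\alpha)^{3s+1}$ in $(ii)$ and $pq=(\beta\alpha)^{3s+2}$ in $(iii)$ via the identity $\beta(\alpha\beta)^{s}=(\beta\alpha)^{s}\beta$.

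The part demanding the most care — and what I expect to be the main obstacle — is showing the three properties are \emph{mutually} exclusive, since a priori a pair $(p,q)$ might admit several factorizations of the stated shapes. Here I would argue through the intrinsic invariants of $pq$, namely its primitive root $\sqrt{pq}$ and exponent $\mathfrak{e}$, both well defined by Lemma \ref{Primitiveroot} and independent of any chosen witness. In case $(i)$ one has $\sqrt{pq}=xq$ with $|\sqrt{pq}|=\tfrac{3}{2}|q|>|q|$, whereas in $(ii)$ and $(iii)$ one has $\sqrt{pq}=\beta\alpha$ with $|\sqrt{pq}|<|q|$ (consistent with Remark \ref{remsqrt}); this separates $(i)$ from $(ii)$ and $(iii)$. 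To separate $(ii)$ from $(iii)$ I would compare the exponent modulo $3$, which is $3s+1\equiv 1$ in $(ii)$ and $3s+2\equiv 2$ in $(iii)$. Since $\sqrt{pq}$ and $\mathfrak{e}$ depend only on $(p,q)$, no pair can satisfy two of the three properties, which completes the proof.
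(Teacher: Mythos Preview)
Your proof is correct and follows essentially the same route as the paper: apply Theorem \ref{theorem-pq} and solve the resulting length equations, obtaining $k=2$ in alternative (a) and $k-3s\in\{1,2\}$ in alternative (b). Your argument is in fact slightly more complete than the paper's, since you spell out the converse direction and supply a clean verification of mutual exclusivity via the invariants $|\sqrt{pq}|$ and $\mathfrak{e}\pmod 3$, which the paper asserts in the statement but does not prove.
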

\begin{proof}\hfill

$(1) \Longrightarrow (2)$.  Assume $pq \in \mathbf{Q}^{(k)}(\mathcal{A})$, for some integer $k \geq 2$. By Theorem \ref{theorem-pq}, we consider the following two mutually exclusive cases:
\paragraph{\textit{Case 1}:} There exists $x\in \mathcal{A}^{+}$ such that $p=(xq)^{k-1}x$, where $xq\in\mathbf{Q}(\mathcal{A})$. In this case, we have
$$2|q|=|p|=(k-1)(|x|+|q|)+|x|=(k-1)|q|+k|x|.$$
Suppose that $k \geq 3$, so we get $2|q|=|p|\geq 2|q|+3|x|$, this is impossible. We conclude that $k=2$, from which we obtain $p=xqx$ and $|q|=2|x|$.
\paragraph{\textit{Case 2}:} There exist $\alpha,\beta \in \mathcal{A}^+$ and an integer $s\geq 1$, such that $p=(\beta\alpha)^{k-s-1}\beta$ and $q= (\alpha\beta)^s \alpha$, with $\alpha\beta \in \mathbf{Q}(\mathcal{A})$.
As $|p|=2|q|$, we deduce that
$$ (k-s-1)(|\alpha|+|\beta|)+|\beta|=2(s (|\alpha|+|\beta|)+|\alpha|),
$$
which leads to $(k-3s)(|\alpha|+|\beta|)= 3|\alpha|$. This forces $k-3s$ to be either $1$ or $2$, that is, $k=3s+1$ or $k=3s+2$.
\begin{itemize}
    \item If $k=3s+1$, then $p=(\beta\alpha)^{2s}\beta$ and $|\beta|=2|\alpha|$.
    \item If $k=3s+2$, then $p=(\beta\alpha)^{2s+1}\beta$ and $|\alpha|=2|\beta|$.
\end{itemize}
 \noindent $(2)\Longrightarrow (1)$. Straightforward.
\end{proof}

\medspace
Now, given a positive integer $l$, we will compute the number of all couples $(p,q)$ of primitive words such that $|p|=2|q|=2l$ and $pq \not \in \mathbf{Q}(\mathcal{A})$. Consider the set
$$\mathcal{E}(\mathcal{A},l):=\left\{ (p,q)\in \mathbf{Q}(\mathcal{A})^2 \colon |p|=2|q|=2l \mbox{ and } pq \not \in \mathbf{Q}(\mathcal{A}) \right\}, $$
we denote by $\varepsilon(n,l)$ its cardinality of this set, where $n\geq 2$ is the size of the alphabet $\mathcal{A}$.
Now, we split the above set in two disjoint sets
$$\mathcal{E}_1(\mathcal{A},l):=\{(p,q)\in \mathcal{E}(\mathcal{A},l):(p,q)\textrm{ satisfies Theorem \ref{p=2q}}(i)\}$$
and
$$\mathcal{E}_2(\mathcal{A},l):=\{(p,q)\in \mathcal{E}(\mathcal{A},l):(p,q)\textrm{ satisfies Theorem \ref{p=2q}}(ii) \textrm{ or }(iii)\}.$$
We denote by $\varepsilon_1(n,l)$ and $\varepsilon_2(n,l)$ the cardinalities of $\mathcal{E}_1(\mathcal{A},l)$ and $\mathcal{E}_2(\mathcal{A},l)$, respectively.

Define the following sets:
$$\begin{array}{lll}
\Lambda(l)\colon& =&\{d\colon d|l, d\not\equiv0\!\!\!\!\pmod{3}, d\geq 4\},\\
\Lambda^+(l)\colon& =&\{d\in \Lambda(l)\colon d \textrm{ is even}\},\\
\Lambda^-(l)\colon& =&\{d\in \Lambda(l)\colon d \textrm{ is odd}\}.
\end{array}$$

We will denote by $\pi_n(k)$ the cardinality of $\mathbf{Q}_k(\mathcal{A})$.

\begin{proposition}\label{Countp=2q-odd}
Let $l$  be a positive integer. Then, we have
 $$\varepsilon_2(n,l)=\underset{d\in \Lambda(l)}{\sum} \pi_n \left( \frac{3l}{d} \right).$$
\end{proposition}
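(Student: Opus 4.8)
The plan is to set up an explicit bijection between $\mathcal{E}_2(\mathcal{A},l)$ and the disjoint union, over $d\in\Lambda(l)$, of the sets of primitive words of length $3l/d$, and then merely count. The guiding computation is that for a pair $(p,q)$ of the form in Theorem~\ref{p=2q}$(ii)$ or $(iii)$ one telescopes the product to $pq=(\beta\alpha)^{k}$, with $k=3s+1$ in case $(ii)$ and $k=3s+2$ in case $(iii)$ (using $\beta(\alpha\beta)^s=(\beta\alpha)^s\beta$). Since $\beta\alpha$ is a conjugate of the primitive word $\alpha\beta$, it is itself primitive, so by Lemma~\ref{Primitiveroot} we have $\sqrt{pq}=\beta\alpha$ and the exponent of $pq$ equals $k$. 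Writing $g=\beta\alpha$ and $d=k$, and combining $|pq|=3l$ with the length relations $|\beta|=2|\alpha|$ (case $(ii)$) and $|\alpha|=2|\beta|$ (case $(iii)$), one gets $|g|=3l/d$ with $d\mid l$, $d\geq 4$ and $d\not\equiv 0\pmod 3$; that is, $d\in\Lambda(l)$, with $d\equiv 1\pmod 3$ exactly in case $(ii)$ and $d\equiv 2\pmod 3$ exactly in case $(iii)$. As the three forms of Theorem~\ref{p=2q} are mutually exclusive, cases $(ii)$ and $(iii)$ never overlap.

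I would then define the candidate inverse. Given $d\in\Lambda(l)$ and a primitive word $g$ of length $3l/d$, set $pq:=g^{d}$ (a word of length $3l$), let $p$ be its prefix of length $2l$ and $q$ its suffix of length $l$. Splitting $g=\beta\alpha$ with $|\alpha|=l/d$ if $d\equiv 1\pmod 3$ and $|\beta|=l/d$ if $d\equiv 2\pmod 3$, a direct check recovers the forms $p=(\beta\alpha)^{2s}\beta,\ q=(\alpha\beta)^{s}\alpha$ with $s=(d-1)/3$ (resp.\ $p=(\beta\alpha)^{2s+1}\beta,\ q=(\alpha\beta)^{s}\alpha$ with $s=(d-2)/3$). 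Since $d\geq 2$, the word $pq=g^{d}$ is non-primitive, and $|p|=2|q|=2l$; hence, \emph{once $p$ and $q$ are shown primitive}, this pair lies in $\mathcal{E}_2(\mathcal{A},l)$. The two maps are mutually inverse: a pair determines $pq$ (as $|p|,|q|$ are fixed), hence $g=\sqrt{pq}$ and $d=|pq|/|g|$, while $(d,g)$ determines $pq=g^{d}$ and thus $(p,q)$; that every pair of $\mathcal{E}_2(\mathcal{A},l)$ arises is built into its definition through Theorem~\ref{p=2q}. Granting the bijection, each $d\in\Lambda(l)$ contributes $\pi_n(3l/d)$ pairs, and summing gives the formula.

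The main obstacle is to verify that the constructed $p$ and $q$ are primitive for \emph{every} primitive $g$; without this, the construction would only inject $\mathcal{E}_2(\mathcal{A},l)$ into the primitive words and the count would degrade to an inequality. For $p$ this is immediate: $p=(\beta\alpha)^{m}\beta$ with $m=2s\geq 2$ or $m=2s+1\geq 3$, and $\beta$ is a proper prefix of the primitive word $\beta\alpha$, so Lemma~\ref{Pref-Suf}$(1)$ applies. For $q=(\alpha\beta)^{s}\alpha$ with $s\geq 2$ the same lemma applies, since $\alpha$ is a proper prefix of the primitive word $\alpha\beta$. The delicate case is $s=1$, i.e.\ $q=\alpha\beta\alpha$ (occurring only for $d=4$ and $d=5$), where Lemma~\ref{Pref-Suf} is unavailable and primitivity genuinely fails without the length hypothesis: for $\alpha=a$, $\beta=baab$ the product $\alpha\beta=abaab$ is primitive while $\alpha\beta\alpha=abaaba=(aba)^{2}$ is not.

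Here I would argue by periodicity. Suppose $q=\alpha\beta\alpha=g^{e}$ with $e\geq 2$. The factorization $q=\alpha\beta\alpha$ exhibits $\alpha$ as a border of $q$, so $q$ has period $|q|-|\alpha|=|\alpha\beta|$, while $q=g^{e}$ gives the period $|g|=|q|/e$. In case $(ii)$ (so $|q|=4|\alpha|$, $|\alpha\beta|=3|\alpha|$, $|g|=4|\alpha|/e$) a short case check over $e\geq 2$ shows the Fine--Wilf bound $|q|\geq |\alpha\beta|+|g|-\gcd(|\alpha\beta|,|g|)$ always holds; hence $q$, and in particular its prefix $\alpha\beta$, has period $\delta:=\gcd(|\alpha\beta|,|g|)$. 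Since $\delta\mid |\alpha\beta|$ and $\delta\leq |g|\leq |q|/2<|\alpha\beta|$, the word $\alpha\beta$ is a proper power, contradicting its primitivity; the case $|\alpha|=2|\beta|$ is identical (with $|q|=5|\beta|$, $|\alpha\beta|=3|\beta|$). This closes the primitivity gap, establishes the bijection, and yields $\varepsilon_2(n,l)=\sum_{d\in\Lambda(l)}\pi_n(3l/d)$.
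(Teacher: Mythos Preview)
Your argument is correct and follows the same bijection as the paper's proof, matching $\mathcal{E}_2(\mathcal{A},l)$ with $\bigsqcup_{d\in\Lambda(l)}\mathbf{Q}_{3l/d}(\mathcal{A})$ via the primitive root and exponent of $pq$. The only substantive difference is in verifying that $q=\alpha\beta\alpha$ is primitive when $s=1$: the paper conjugates to $\beta\alpha^{2}$ (respectively $\alpha^{2}\beta$) and invokes Lemmas~\ref{Pref-Suf} and~\ref{Chin-Acta}, whereas your Fine--Wilf periodicity argument is self-contained and does not rely on $\alpha$ or $\beta$ being individually primitive (a hypothesis that Lemma~\ref{Chin-Acta} formally requires). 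Your injectivity check via the uniqueness of the primitive root is also tidier than the paper's case-by-case verification.
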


\begin{proof} Consider the map
$$\varphi\colon \underset{d\in \Lambda(l)}{\bigcup} \mathbf{Q}_{\frac{3l}{d}}(\mathcal{A}) \longrightarrow  \mathcal{E}_2(\mathcal{A},l), $$ defined by:

\begin{itemize}
    \item[$(a)$] If $d=3s+1 \geq 4$ is a divisor of $l$ and $u=\beta\alpha \in \mathbf{Q}_{\frac{3l}{d}}(\mathcal{A})$, then we let
    $$\varphi(u)=(u^{2s}\beta, (\alpha \beta)^s \alpha), $$
    where $\beta$ is the prefix of $u$ of length $\frac{2l}{d}$ and $\alpha$ is the suffix of $u$ of length $\frac{l}{d}$.
    \item[$(b)$]  If $d=3s+2 \geq 4$ is a divisor of $l$ and $u=\beta\alpha \in \mathbf{Q}_{\frac{3l}{d}}(\mathcal{A})$, then we let
    $$\varphi(u)=(u^{2s+1}\beta, (\alpha \beta)^s \alpha), $$
    where $\beta$ is the prefix of $u$ of length $\frac{l}{d}$ and $\alpha$ the suffix of $u$ of length $\frac{2l}{d}$.
\end{itemize}
  We should verify that $\varphi$ is well defined, that is $\varphi(u)\in\mathcal{E}_2(\mathcal{A},l)$ for any $u\in\underset{d\in \Lambda(l)}{\bigcup} \mathbf{Q}_{\frac{3l}{d}}(\mathcal{A})$.
Indeed in $(a)$: Let $p=u^{2s}\beta=(\beta\alpha)^{2s}\beta$ and $q= (\alpha \beta)^s \alpha$. By Lemma \ref{Pref-Suf}, $p$ is a primitive word as $s\geq1$.
Similarly, by the same Lemma, $q$ is also a primitive word for $s \geq 2$, since $u=\beta\alpha$ and $\alpha\beta$ are conjugate and $u$ is primitive. For $s=1$, note that $q$ and $\beta \alpha^2$ are conjugate, with $|\beta|=2|\alpha|$. On the other hand, by Lemmas \ref{Pref-Suf} and \ref{Chin-Acta}, $\beta\alpha^2$ is primitive. Thus, $q$ is primitive.
It is clear that $|p|=2|q|=2l$ and $pq$ is non-primitive.  Finally, we have $\varphi(u)\in\mathcal{E}_2(\mathcal{A},l)$.
An analogous argument shows that $\varphi(u) \in \mathcal{E}_(\mathcal{A},l)$ in $(b)$.

It is clear that $\varphi$ is an onto mapping by Theorem \ref{p=2q}. It remains to show that $\varphi$ is one-to-one.
 Indeed, let $u_1,u_2\in \underset{d\in \Lambda(l)}{\bigcup} \mathbf{Q}_{\frac{3l}{d}}(\mathcal{A})$ be such that $\varphi(u_1)=\varphi(u_2)$, and  $d_1,d_2 \in \Lambda(l)$ be such that $u_1 \in \mathbf{Q}_{\frac{3l}{d_1}}(\mathcal{A})$ and $u_2\in \mathbf{Q}_{\frac{3l}{d_2}}(\mathcal{A})$. Three cases have to be considered.

If  $d_1=3s_1+1$ and $d_2=3s_2+1$, in this case, letting $u_1=\beta_1\alpha_1$  and $u_2=\beta_2\alpha_2$, with $|\beta_1|=2|\alpha_1|$ and $|\beta_2|=2|\alpha_2|$, we obtain  $(\beta_1\alpha_1)^{2s_1}\beta_1=(\beta_2\alpha_2)^{2s_2}\beta_2$ and $(\alpha_1\beta_1)^{s}\alpha_1=(\alpha_2\beta_2)^{s}\alpha_2$. Concatenating the words in the two equalities (side by side), we get $(\beta_1\alpha_1)^{3s_1+1}=(\beta_2\alpha_2)^{3s_2+1}$, and as a result, $\beta_1\alpha_1=\beta_2\alpha_2$, that is $u_1=u_2$.

The two other cases ``$d_1=3s_1+2,\; d_2=3s_2+2 $" and ``$d_1=3s_1+1,\; d_2=3s_2+2$" can be treated by analogous arguments.

It follows that $\varphi$ is a bijection, and consequently
$$
\varepsilon_2(n,l)= \underset{d\in \Lambda(l)}{\sum} \left|\mathbf{Q}_{\frac{3d}{l}}(\mathcal{A}) \right|= \underset{d\in \Lambda(l)}{\sum} \pi_n \left( \frac{3l}{d} \right).
$$
\end{proof}


\begin{proposition}\label{Countp=2q-even}
 If $l$ is an even positive integer, then
$$\varepsilon_1(n,l)=n^{\frac{l}{2}} \pi_n(l)-\underset{d\in \Lambda^+(l)}{\sum} \pi_n \left( \frac{3l}{d} \right). $$
\end{proposition}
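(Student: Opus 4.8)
The plan is to express $\varepsilon_1(n,l)$ as the difference between a count of all admissible pairs and a count of ``defective'' ones. Since condition $(i)$ of Theorem \ref{p=2q} forces $|q|=2|x|=l$, the hypothesis that $l$ is even is exactly what makes $|x|=l/2$ an integer (for odd $l$ the set $\mathcal{E}_1(\mathcal{A},l)$ is empty). First I would set up a bijection between $\mathcal{E}_1(\mathcal{A},l)$ and
$$B:=\left\{(x,q)\colon x\in\mathcal{A}^{l/2},\ q\in\mathbf{Q}_l(\mathcal{A}),\ xq\in\mathbf{Q}(\mathcal{A})\right\}.$$
By Theorem \ref{p=2q}$(i)$, every $(p,q)\in\mathcal{E}_1(\mathcal{A},l)$ has the shape $p=xqx$ with $xq$ primitive, where $x$ is simultaneously the prefix and the suffix of $p$ of length $l/2$; conversely I send $(x,q)\in B$ to $(xqx,q)$. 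As $pq=(xq)^2$ is automatically non-primitive with root $xq$, this genuinely realizes case $(i)$ (cf.\ Remark \ref{remsqrt}), so the inverse map lands in $\mathcal{E}_1(\mathcal{A},l)$ provided $p=xqx$ is primitive, which is the one nontrivial point (see below). The two maps are mutually inverse, so granting this we get $\varepsilon_1(n,l)=|B|$, and splitting $B$ according to whether $xq$ is primitive gives
$$\varepsilon_1(n,l)=n^{\frac{l}{2}}\pi_n(l)-N,\qquad N:=\left|\left\{(x,q)\colon x\in\mathcal{A}^{l/2},\ q\in\mathbf{Q}_l(\mathcal{A}),\ xq\notin\mathbf{Q}(\mathcal{A})\right\}\right|,$$
since there are $n^{l/2}$ choices of $x$ and $\pi_n(l)$ choices of $q$ in total. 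The whole statement thus reduces to proving $N=\sum_{d\in\Lambda^+(l)}\pi_n(3l/d)$.

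The main obstacle is showing that $xq\in\mathbf{Q}(\mathcal{A})$ already forces $p=xqx\in\mathbf{Q}(\mathcal{A})$. I would argue by periodicity. Suppose $p=xqx=h^{m}$ with $h$ primitive and $m\geq2$; then $|h|=2l/m$ is the least period of $p$. Because $x$ is at once a prefix and a suffix of $p$, it is a border, so $p$ also admits the period $|p|-|x|=3l/2$. Since $2l/m+3l/2-\gcd(2l/m,3l/2)\leq 2l=|p|$ for every $m\geq2$, the Fine--Wilf theorem yields that $p$ has the period $e:=\gcd(2l/m,3l/2)$. Now $e=2l/m$ holds only when $4\mid m$; otherwise $e<2l/m$ contradicts the minimality of the period $2l/m$. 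Finally, when $4\mid m$ the middle factor $q$ has length $(m/2)|h|$, a multiple of $|h|$, hence $q$ is the $(m/2)$-th power of a conjugate of $h$ with $m/2\geq2$, contradicting the primitivity of $q$. Thus no such $m$ exists and $p$ is primitive. (The delicate case $m=2$ can alternatively be settled directly through Lemma \ref{LySch}, by writing $q=q_1q_2$ with $|q_1|=|q_2|$ and deriving the conjugacy relation $xq_1=q_2x$.)

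To evaluate $N$, observe that a pair counted by $N$ is the same datum as a non-primitive word $w=xq$ of length $3l/2$ whose suffix $q$ of length $l$ is primitive, because $x$ and $q$ are recovered as the length-$l/2$ prefix and length-$l$ suffix of $w$. Writing $w=g^{k}$ with $g=\sqrt{w}$ and $k\geq2$, so that $|g|=3l/(2k)$, the decisive dichotomy is governed by $k\bmod 3$: if $3\mid k$ then the length-$l$ suffix equals $g^{2k/3}$ with $2k/3\geq2$, hence is non-primitive and is excluded; if $3\nmid k$ then that suffix takes the form $(\alpha\beta)^{a}\alpha$ with $\alpha\beta$ a conjugate of $g$, which is primitive by the very argument used in the proof of Proposition \ref{Countp=2q-odd}. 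Setting $d:=2k$, the surviving indices are exactly the even $d\geq4$ with $3\nmid d$ and $d\mid l$ (using that $3\nmid d$ turns $2k\mid 3l$ into $d\mid l$), that is $d\in\Lambda^+(l)$, while $g$ ranges over $\mathbf{Q}_{3l/d}(\mathcal{A})$. The assignment $w\mapsto g$ is then a bijection from the pairs counted by $N$ onto $\bigcup_{d\in\Lambda^+(l)}\mathbf{Q}_{3l/d}(\mathcal{A})$, whence $N=\sum_{d\in\Lambda^+(l)}\pi_n(3l/d)$.

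Combining the two displays yields the asserted formula $\varepsilon_1(n,l)=n^{l/2}\pi_n(l)-\sum_{d\in\Lambda^+(l)}\pi_n(3l/d)$. I expect the periodicity step establishing $p=xqx\in\mathbf{Q}(\mathcal{A})$ to be the genuine difficulty; the two bijections are routine bookkeeping once the two primitivity facts---one for $p=xqx$, one for the length-$l$ suffix of $g^{k}$---are in hand, the latter being inherited verbatim from Proposition \ref{Countp=2q-odd}.
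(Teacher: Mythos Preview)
Your argument is correct and follows the same decomposition as the paper: both write $\varepsilon_1(n,l)$ as the total count $n^{l/2}\pi_n(l)$ of pairs $(x,q)\in\mathcal{A}^{l/2}\times\mathbf{Q}_l(\mathcal{A})$ minus the number $N$ of ``defective'' pairs with $xq$ non-primitive, and both identify $N$ with $\sum_{d\in\Lambda^+(l)}\pi_n(3l/d)$ via the bijection $w\mapsto\sqrt{w}$ (the paper routes this through its technical Lemma~\ref{technical-Comp}, which packages exactly your $k\bmod 3$ case analysis together with the primitivity facts you borrow from Proposition~\ref{Countp=2q-odd}).

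The one genuine methodological difference is how you establish that $p=xqx$ is primitive. The paper observes that $xqx$ is conjugate to $qx^2=q(\sqrt{x})^{2j}$ and invokes Lemma~\ref{Chin-Acta} (with $|q|=2j|\sqrt{x}|$) to conclude in one line. You instead run a Fine--Wilf argument on the two periods $2l/m$ and $3l/2$ of $p$, which is more self-contained but requires checking the hypothesis $2l/m+3l/2-\gcd(2l/m,3l/2)\le 2l$ (it holds, with equality at $m=2,3$). Note that your argument actually proves the stronger fact the paper uses---$xqx$ is primitive for \emph{every} $x$, not just those with $xq$ primitive---since the only hypotheses you invoke are $|q|=2|x|$ and $q\in\mathbf{Q}(\mathcal{A})$. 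Also, in the $4\mid m$ case the middle block is exactly $h^{m/2}$, not merely a power of a conjugate, because $|x|=(m/4)|h|$ lands on an $h$-boundary.
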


The proof relies on the following technical lemma.


\begin{lemma}\label{technical-Comp} Let $p$ and $q$ be primitive words such that $|p|=2|q|$. Then, the following statements are equivalent.
\begin{itemize}
  \item[$(1)$] There exists $x\in \mathcal{A}^+$ such that $p=xqx$ and $xq$ is non-primitive.
  \item[$(2)$] One of the following conditions holds.
  \begin{itemize}
    \item[$(i)$] There exist $\alpha, \beta \in \mathcal{A}^+$ and an odd integer $s \geq 1$ such that $\alpha \beta \in \mathbf{Q}(\mathcal{A})$,
    $$q=(\alpha\beta)^s \alpha,\quad  p=(\beta \alpha)^{2s}\beta, \mbox{ with } |\beta|=2|\alpha|.$$
    \item[$(ii)$] There exist $\alpha, \beta \in \mathcal{A}^+$ and an even integer $s \geq 1$ such that $\alpha \beta \in \mathbf{Q}(\mathcal{A})$,
    $$q=(\alpha\beta)^s \alpha,\quad  p=(\beta \alpha)^{2s+1}\beta, \mbox{ with } |\alpha|=2|\beta|.$$
  \end{itemize}
\end{itemize}
\end{lemma}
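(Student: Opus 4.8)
The plan is to build everything on the single identity
\[
pq = xqx\cdot q = (xq)(xq) = (xq)^2,
\]
valid whenever $p=xqx$, and to feed it into the classification of Theorem \ref{p=2q} together with the uniqueness of the primitive root from Lemma \ref{Primitiveroot}. Note first that if $p=xqx$ then $x$ is forced to be the prefix of $p$ of length $|q|/2$, and that $pq=(xq)^2$ is non-primitive as soon as $x\neq\varepsilon$, with $\sqrt{pq}=\sqrt{xq}$.

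For $(1)\Longrightarrow(2)$ I would argue as follows. Assuming $p=xqx$ with $xq$ non-primitive, the identity above gives $pq=(xq)^2\notin\mathbf{Q}(\mathcal{A})$, so Theorem \ref{p=2q} applies and exactly one of its mutually exclusive cases $(i)$, $(ii)$, $(iii)$ holds. Case $(i)$ is impossible: there the inner factor is again the length-$|q|/2$ prefix of $p$, hence equal to $x$, and $(i)$ would force $xq$ to be primitive, contradicting the hypothesis. In case $(ii)$ a direct computation using $\beta(\alpha\beta)^s=(\beta\alpha)^s\beta$ yields $pq=(\beta\alpha)^{3s+1}$, and in case $(iii)$ it yields $pq=(\beta\alpha)^{3s+2}$; in either case $\beta\alpha$ is primitive, being a conjugate of $\alpha\beta$. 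Comparing with $pq=(xq)^2$ and invoking uniqueness of the primitive root gives $\sqrt{xq}=\beta\alpha$ and that twice the exponent of $xq$ equals $3s+1$ (resp. $3s+2$). Since the left-hand side is even, $3s+1$ (resp. $3s+2$) must be even, which forces $s$ odd in case $(ii)$ and $s$ even in case $(iii)$; these are exactly $(2)(i)$ and $(2)(ii)$.

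For the converse $(2)\Longrightarrow(1)$ I would simply exhibit $x$. Under $(2)(i)$ (so $s$ is odd) I set $x=(\beta\alpha)^{(s-1)/2}\beta$, and under $(2)(ii)$ (so $s$ is even) I set $x=(\beta\alpha)^{s/2}\beta$; in both cases $x\in\mathcal{A}^+$ because $\beta\neq\varepsilon$. A routine concatenation check (again via $\beta(\alpha\beta)^s=(\beta\alpha)^s\beta$) then confirms $xqx=p$ and shows $xq=(\beta\alpha)^{(3s+1)/2}$ (resp. $(\beta\alpha)^{3s/2+1}$). In each instance the exponent is at least $2$, so $xq$ is a proper power of the primitive word $\beta\alpha$ and hence non-primitive, as required.

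The only real content—beyond bookkeeping—is the parity extraction in $(1)\Longrightarrow(2)$: squaring $xq$ forces the exponent of $\beta\alpha$ in $pq$ to be even, and this even/odd dichotomy is precisely the refinement that separates the two branches of $(2)$ from the undifferentiated cases $(ii)$ and $(iii)$ of Theorem \ref{p=2q}. The main thing to be careful about is the determinacy of $x$ as the length-$|q|/2$ prefix of $p$, since it is this observation that lets me discard case $(i)$; the explicit factorizations in the converse I expect to be entirely mechanical.
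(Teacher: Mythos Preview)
Your proposal is correct and follows essentially the same approach as the paper's proof: both directions hinge on the identity $pq=(xq)^2$, the case split of Theorem~\ref{p=2q}, and the parity extraction from the exponent of $\beta\alpha$ via uniqueness of the primitive root; the converse uses the identical explicit choices of $x$. The only cosmetic difference is that the paper discards Theorem~\ref{p=2q}$(i)$ by invoking Remark~\ref{remsqrt} (non-primitivity of $xq$ forces $|\sqrt{pq}|\le\tfrac{3}{4}|q|<|q|$), whereas you discard it via the determinacy of $x$ as the length-$|q|/2$ prefix of $p$; both arguments are equally short and valid.
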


\begin{proof}\textrm{\; }

$(1)\Longrightarrow (2)$. Assume that $p=xqx$, with $xq$ non-primitive. The fact that $|p|=2|q|$ leads to $|q|=2|x|$.
From Remark \ref{remsqrt} and Theorem \ref{p=2q}, there exist an integer $s\geq 1$, and $\alpha,\beta\in \mathcal{A}^+$ such $\alpha\beta\in \mathbf{Q}(\mathcal{A})$, $q=(\alpha\beta)^s\alpha$ and either $p=(\beta\alpha)^{2s}\beta$, with $|\beta|=2|\alpha|$;
or $p=(\beta\alpha)^{2s+1}\beta$, with $|\alpha|=2|\beta|$.

\emph{Case 1.} Suppose that $p=(\beta\alpha)^{2s}\beta$, with $|\beta|=2|\alpha|$.
As $pq=(xq)^2=(\beta\alpha)^{3s+1}$, we deduce, by Lemma \ref{Primitiveroot}, that $\sqrt{pq}=\sqrt{xq}=\beta\alpha$. Consequently, we have $xq=(\beta\alpha)^{j}$, where $j=\frac{3s+1}{2}$ is an integer. As a result, $s$ is odd.

\emph{Case 2.} Suppose that $p=(\beta\alpha)^{2s+1}\beta$, with $|\alpha|=2|\beta|$. As in Case 1, we have $xq=(\beta\alpha)^{j}$ where $j=\frac{3s+2}{2}$ is an integer. Thus, $s$ is even.

$(i)\Longrightarrow (1)$. Suppose that $s$ is odd and $$q=(\alpha\beta)^s \alpha,\quad  p=(\beta \alpha)^{2s}\beta, \mbox{ with } |\beta|=2|\alpha|.$$
Then, letting $x=(\beta\alpha)^{\frac{s-1}{2}}\beta$, we get $p=xqx$ and $xq=(\beta\alpha)^{\frac{3s+1}{2}}$ is a non-primitive word.

$(ii)\Longrightarrow (1)$. Suppose that $s$ is even and $$q=(\alpha\beta)^s \alpha,\quad  p=(\beta \alpha)^{2s+1}\beta, \mbox{ with } |\alpha|=2|\beta|.$$
Then, letting $x=(\beta\alpha)^{\frac{s}{2}}\beta$, we get $p=xqx$ and $xq=(\beta\alpha)^{\frac{3s}{2}+1}$ is a non-primitive word.
\end{proof}

\begin{proof}[Proof of Proposition \ref{Countp=2q-even}]Let $l$ be an even integer, and
\begin{align*}
&\mathcal{G}:= \left\{(p,q)\in \mathbf{Q}(\mathcal{A})^2:p=xqx, \textrm{  for some  }x\in \mathcal{A}^+, |q|=2|x|=l \textrm{  and  } xq\notin \mathbf{Q}(\mathcal{A})\right\},\\
 &\mathcal{H}:= \left\{(p,q)\in \mathbf{Q}(\mathcal{A})^2:p=xqx, \textrm{  for some  }x\in \mathcal{A}^+\textrm{  and  } |q|=2|x|=l\right\}.\\
\end{align*}

\noindent $-$ Consider the map
$$\Phi\colon \underset{d\in \Lambda^+(l)}{\bigcup} \mathbf{Q}_{\frac{3l}{d}}(\mathcal{A}) \longrightarrow  \mathcal{G}, $$ defined for $d \in \Lambda^+(l)$ and $u\in\mathbf{Q}_{\frac{3l}{d}}(\mathcal{A})$ as follows.

\begin{itemize}
    \item If  $d=3s+1$ with $s$ odd, then
    $$\Phi(u):=((\beta \alpha)^{2s}\beta, (\alpha \beta)^s \alpha),$$
     where $\beta$ is the prefix of $u$ of length $\frac{2l}{d}$ and $\alpha$ is the suffix of $u$ of length $\frac{l}{d}$.
    \item If $d=3s+2$ with $s$ even, then
    $$\Phi(u):=((\beta \alpha)^{2s+1}\beta, (\alpha \beta)^s \alpha), $$
     where $\beta$ is the prefix of $u$ of length $\frac{l}{d}$ and $\alpha$ is the suffix of $u$ of length $\frac{2l}{d}$.
\end{itemize}
Then, according to Lemma \ref{technical-Comp} and an analogous argument to the proof of Theorem \ref{Countp=2q-odd}, we deduce that $\Phi$ is a well-defined bijection. Consequently, we have
$$|\mathcal{G}|=\underset{d\in \Lambda^+(l)}{\sum} \pi_n \left( \frac{3l}{d} \right).$$

\noindent $-$ Consider the assignment $$\begin{array}{lll}
                                                    \Psi: \mathcal{A}^{\frac{l}{2}}\times\mathbf{Q}_{l}(\mathcal{A})&\longrightarrow&\mathcal{H} \\
                                                       &  & \\
                                                     \quad\quad\quad (x,q) &\longmapsto& (xqx,q).
                                                   \end{array}$$
We will show that $\Psi$ is well-defined, that is $xqx$ is primitive if $(x,q)\in \mathcal{A}^{\frac{l}{2}}\times\mathbf{Q}_{l}(\mathcal{A})$. Write $x=\sqrt{x}^{j}$, where $j\geq 1$. Then,
$qx^2=q\sqrt{x}^{2j}$, where $|q|=2|x|=2j|\sqrt{x}|$. By Lemma \ref{Chin-Acta}, $qx^2$ is primitive, so that $xqx$ is primitive. In addition, $\Psi$ is a bijection by construction.
As a result, we have $$|\mathcal{H}|=n^{\frac{l}{2}}\pi_n(l).$$

\noindent $-$ Finally, note that $\mathcal{E}_1(\mathcal{A},l)=\mathcal{H}\setminus\mathcal{G}$, from which we get
$$\varepsilon_1(n,l)=n^{\frac{l}{2}} \pi_n(l)-\underset{d\in \Lambda^+(l)}{\sum} \pi_n \left( \frac{3l}{d} \right).$$
\end{proof}

Now, we are in position to state our second main result.
\begin{theorem}
Let $l$ be a positive integer. Then, we have
$$\varepsilon(n,l)=\left\{
\begin{array}{ll}
n^{\frac{l}{2}} \pi_n(l)+\underset{d\in \Lambda^-(l)}{\sum} \pi_n \left( \frac{3l}{d} \right)& \text{if $l$ is even,} \\
\underset{d\in \Lambda(l)}{\sum} \pi_n \left( \frac{3l}{d} \right)& \text{if $l$ is odd.} \\
                                                                              \end{array}
                                                                            \right.$$
\end{theorem}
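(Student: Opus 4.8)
The plan is to recognize that this final theorem is essentially a bookkeeping corollary of the two preceding propositions, since by construction $\mathcal{E}(\mathcal{A},l)$ is the disjoint union of $\mathcal{E}_1(\mathcal{A},l)$ and $\mathcal{E}_2(\mathcal{A},l)$, whence $\varepsilon(n,l)=\varepsilon_1(n,l)+\varepsilon_2(n,l)$. I would import $\varepsilon_2(n,l)=\sum_{d\in\Lambda(l)}\pi_n(3l/d)$ directly from Proposition \ref{Countp=2q-odd}, noting that this identity was established for every positive integer $l$ regardless of parity. The remaining work is to determine $\varepsilon_1(n,l)$ in each parity class and then combine.

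First I would dispose of the odd case. The set $\mathcal{E}_1(\mathcal{A},l)$ collects the couples satisfying Theorem \ref{p=2q}$(i)$, namely $p=xqx$ with $xq\in\mathbf{Q}(\mathcal{A})$ and $|q|=2|x|$. Since $|q|=l$, such an $x$ exists only if $l=2|x|$ is even; consequently $\mathcal{E}_1(\mathcal{A},l)=\emptyset$ and $\varepsilon_1(n,l)=0$ whenever $l$ is odd. For odd $l$ we therefore get
$$\varepsilon(n,l)=\varepsilon_2(n,l)=\sum_{d\in\Lambda(l)}\pi_n\!\left(\frac{3l}{d}\right),$$
which is exactly the claimed formula in the odd branch.

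For the even case I would invoke Proposition \ref{Countp=2q-even}, which gives $\varepsilon_1(n,l)=n^{l/2}\pi_n(l)-\sum_{d\in\Lambda^+(l)}\pi_n(3l/d)$, and add the expression for $\varepsilon_2(n,l)$. The key structural observation is that $\Lambda(l)$ is the disjoint union $\Lambda^+(l)\sqcup\Lambda^-(l)$ of its even and odd members, so that
$$\sum_{d\in\Lambda(l)}\pi_n\!\left(\frac{3l}{d}\right)-\sum_{d\in\Lambda^+(l)}\pi_n\!\left(\frac{3l}{d}\right)=\sum_{d\in\Lambda^-(l)}\pi_n\!\left(\frac{3l}{d}\right).$$
Summing $\varepsilon_1(n,l)+\varepsilon_2(n,l)$ and applying this cancellation yields $\varepsilon(n,l)=n^{l/2}\pi_n(l)+\sum_{d\in\Lambda^-(l)}\pi_n(3l/d)$, completing the even branch.

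I do not anticipate a genuine obstacle here, as the argument is a direct assembly of the two propositions. The only point requiring a moment's care is the vanishing of $\varepsilon_1(n,l)$ for odd $l$ (the divisibility constraint $|q|=2|x|$), together with the clean complementary partition of $\Lambda(l)$ into $\Lambda^+(l)$ and $\Lambda^-(l)$; both are immediate once stated explicitly, so the whole proof reduces to these two observations plus the telescoping of the divisor sums.
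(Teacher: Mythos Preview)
Your proposal is correct and matches the paper's approach: the theorem is stated there without proof, as an immediate corollary of Propositions~\ref{Countp=2q-odd} and~\ref{Countp=2q-even} via exactly the disjoint decomposition $\varepsilon(n,l)=\varepsilon_1(n,l)+\varepsilon_2(n,l)$ you describe. Your two observations---that $\varepsilon_1(n,l)=0$ for odd $l$ because $|q|=2|x|$ forces $l$ even, and that $\Lambda(l)=\Lambda^+(l)\sqcup\Lambda^-(l)$ yields the cancellation in the even case---are precisely what the paper leaves implicit.
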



In the remainder, we give reformulations for $\varepsilon_1(n,l)$ and $\varepsilon_2(n,l)$.
Let us first recall the M\"obius Inversion Formula and a lemma from \cite{Adel-Echi-Kroumi}.
Define the M\"obius function $$\mu(n)=\left\{
\begin{array}{lll}
     1 & \text{if}& n=1 ,\\
    0&\text{if}& n \text{ is a multiple of the square of a prime number}, \\
      (-1)^k & \text{if} & n \text{ is a squarefree with } k \text{ prime factors.} \\
      \end{array} \right.$$
Clearly $\mu$ is a multiplicative function (i.e., $\mu(mn)=\mu(m)\mu(n)$, for any relatively prime numbers $m,n$).
Let $f,g\colon \mathbb{N} \longrightarrow \mathbb{C}$ be arithmetic functions, then the following property holds.
\begin{mob}[\cite{Hardy}]\rm Let $n$ be a positive integer. Then, we have the following equivalence:
$$
g(m)=\underset{d|m}{\sum}f(d)\textrm{ for all }m|n\Longleftrightarrow f(m)=\underset{d|m}{\sum}\mu(d)g\left(\frac{m}{d}\right)\textrm{ for all }m|n.$$
\end{mob}


\begin{lemma}[\cite{Adel-Echi-Kroumi}]\label{lemma1}Let  $l$ be a positive integer and $\mathbf{r}$ be a prime number such that $gcd(l,\mathbf{r})=1$. Then, for any nonnegative integer $m$, we have
$$
\pi_n(\mathbf{r}^{m+1}l)=\underset{d|l}{\sum}\mu(d)\left(n^{\frac{\mathbf{r}^{m+1}l}{d}}-n^{\frac{\mathbf{r}^{m}l}{d}}\right).$$
\end{lemma}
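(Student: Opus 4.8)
The plan is to derive the formula from the classical count of primitive words together with the M\"obius Inversion Formula already recalled above. The starting point is the basic identity $n^{k}=\sum_{d\mid k}\pi_n(d)$, valid for every positive integer $k$: by Lemma \ref{Primitiveroot}(2), each of the $n^{k}$ words of length $k$ has a unique primitive root $\sqrt{u}$, whose length $d$ divides $k$ and satisfies $u=\sqrt{u}^{\,k/d}$, so counting words according to the length $d$ of their primitive root partitions $\mathcal{A}^{k}$ into the classes $\mathbf{Q}_d(\mathcal{A})$ with $d\mid k$. Applying the M\"obius Inversion Formula with $g(k)=n^{k}$ and $f(k)=\pi_n(k)$ then yields the closed form
$$\pi_n(k)=\sum_{d\mid k}\mu(d)\,n^{k/d}.$$

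Next I would specialize to $k=\mathbf{r}^{m+1}l$ and reorganize the sum over divisors. Since $\gcd(l,\mathbf{r})=1$, every divisor of $\mathbf{r}^{m+1}l$ factors uniquely as $\mathbf{r}^{a}d$ with $0\le a\le m+1$ and $d\mid l$, and moreover $\gcd(\mathbf{r},d)=1$ for each such $d$. Hence
$$\pi_n(\mathbf{r}^{m+1}l)=\sum_{a=0}^{m+1}\;\sum_{d\mid l}\mu(\mathbf{r}^{a}d)\,n^{\mathbf{r}^{m+1}l/(\mathbf{r}^{a}d)}.$$

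The key simplification is that $\mu$ annihilates almost every term. By multiplicativity of $\mu$ and coprimality of $\mathbf{r}^{a}$ and $d$, one has $\mu(\mathbf{r}^{a}d)=\mu(\mathbf{r}^{a})\mu(d)$, and $\mu(\mathbf{r}^{a})=0$ as soon as $a\ge 2$, since $\mathbf{r}^{a}$ is then divisible by the square of the prime $\mathbf{r}$. Only the terms $a=0$ (with $\mu(\mathbf{r}^{0})=1$) and $a=1$ (with $\mu(\mathbf{r})=-1$) survive, so that
$$\pi_n(\mathbf{r}^{m+1}l)=\sum_{d\mid l}\mu(d)\,n^{\frac{\mathbf{r}^{m+1}l}{d}}-\sum_{d\mid l}\mu(d)\,n^{\frac{\mathbf{r}^{m}l}{d}},$$
where in the second sum I have used $\mathbf{r}^{m+1}l/(\mathbf{r}d)=\mathbf{r}^{m}l/d$. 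Combining the two sums termwise produces exactly the asserted identity. I expect no genuine obstacle here; the only points demanding care are the bookkeeping that each divisor of $\mathbf{r}^{m+1}l$ is captured exactly once by the pair $(a,d)$, and that the coprimality of $d$ with $\mathbf{r}$ legitimizes the use of multiplicativity of $\mu$ — both of which follow at once from the hypothesis $\gcd(l,\mathbf{r})=1$.
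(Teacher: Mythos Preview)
Your argument is correct. The paper does not supply a proof of this lemma at all: it is quoted from \cite{Adel-Echi-Kroumi} and used as a black box, so there is nothing in the present paper to compare against. What you wrote is the standard derivation --- start from the counting identity $n^{k}=\sum_{d\mid k}\pi_n(d)$, invert via M\"obius to get $\pi_n(k)=\sum_{d\mid k}\mu(d)\,n^{k/d}$, then exploit the unique factorisation of divisors of $\mathbf{r}^{m+1}l$ as $\mathbf{r}^{a}d$ and the vanishing of $\mu(\mathbf{r}^{a})$ for $a\ge 2$ --- and every step is justified.
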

Combining this lemma and the M\"obius inversion formula, we get the following corollary.

\begin{corollary}\label{corollary1}
Under the same notations as conditions in Lemma \ref{lemma1}, we have
 $$
 \sum_{d|l} \pi_n\left( \frac{\mathbf{r}^{m+1}l}{d}\right)=n^{\mathbf{r}^{m+1}l}-n^{\mathbf{r}^m l}.
 $$
\end{corollary}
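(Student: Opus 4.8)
The plan is to feed Lemma \ref{lemma1}, applied to every divisor of $l$, into the M\"obius Inversion Formula. Set $h(j) := n^{\mathbf{r}^{m+1} j} - n^{\mathbf{r}^m j}$ and $f(j) := \pi_n(\mathbf{r}^{m+1} j)$. The first observation is that Lemma \ref{lemma1} is valid not only for $l$ itself but for every divisor $k$ of $l$: since $\gcd(l, \mathbf{r}) = 1$, any $k \mid l$ also satisfies $\gcd(k, \mathbf{r}) = 1$, so the lemma gives
$$
f(k) = \pi_n(\mathbf{r}^{m+1} k) = \sum_{d \mid k} \mu(d)\left(n^{\frac{\mathbf{r}^{m+1} k}{d}} - n^{\frac{\mathbf{r}^m k}{d}}\right) = \sum_{d \mid k} \mu(d)\, h\!\left(\tfrac{k}{d}\right)
$$
for all $k \mid l$.

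Next I would recognize this identity as exactly the right-hand side of the equivalence in the M\"obius Inversion Formula, applied with $l$ playing the role of the modulus (the ``$n$'' of that formula) and with $g$ taken to be $h$ while the role of $f$ is played by $k \mapsto \pi_n(\mathbf{r}^{m+1}k)$. Since $f(k) = \sum_{d \mid k} \mu(d)\, h(k/d)$ holds for all $k \mid l$, the formula yields the left-hand equivalence, namely
$$
h(k) = \sum_{d \mid k} f(d) = \sum_{d \mid k} \pi_n(\mathbf{r}^{m+1} d) \qquad \text{for all } k \mid l.
$$
Specializing to $k = l$ and recalling that $h(l) = n^{\mathbf{r}^{m+1} l} - n^{\mathbf{r}^m l}$ gives $\sum_{d \mid l} \pi_n(\mathbf{r}^{m+1} d) = n^{\mathbf{r}^{m+1} l} - n^{\mathbf{r}^m l}$. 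A cosmetic reindexing then closes the gap with the stated form: the map $d \mapsto l/d$ is a bijection of the set of divisors of $l$, so $\sum_{d \mid l} \pi_n(\mathbf{r}^{m+1} d) = \sum_{d \mid l} \pi_n\!\left(\frac{\mathbf{r}^{m+1} l}{d}\right)$, which is precisely the claimed identity.

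The only genuinely delicate point is justifying that the hypotheses of the M\"obius Inversion Formula are met: the formula requires the relation to hold \emph{for all} divisors $k$ of $l$, not merely for $l$, so the essential preliminary step is to note that Lemma \ref{lemma1} applies verbatim to each such $k$ (guaranteed by $\gcd(k,\mathbf{r}) = 1$). Everything else is a direct substitution together with the standard divisor reindexing. As an independent sanity check one may also verify the corollary combinatorially: starting from the basic identity $n^{N} = \sum_{d \mid N} \pi_n(d)$ (each word of length $N$ being a unique power of its primitive root, whose length divides $N$), the result follows by subtracting the cases $N = \mathbf{r}^{m+1} l$ and $N = \mathbf{r}^m l$ and using that, thanks to $\gcd(l,\mathbf{r})=1$, the divisors of $\mathbf{r}^{m+1} l$ split uniquely as $\mathbf{r}^j e$ with $0 \le j \le m+1$ and $e \mid l$; the difference then telescopes to exactly $\sum_{e \mid l} \pi_n(\mathbf{r}^{m+1} e)$, in agreement with the M\"obius-inversion derivation.
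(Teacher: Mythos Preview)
Your proof is correct and follows exactly the route indicated by the paper, which simply states that the corollary is obtained by combining Lemma~\ref{lemma1} with the M\"obius Inversion Formula. Your additional combinatorial sanity check via $n^N=\sum_{d\mid N}\pi_n(d)$ is a valid alternative derivation, but the primary argument matches the paper's approach.
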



\begin{proposition}\label{prop1}
Let $l=3^{m}l_1$ be a positive integer, where  $l_1\geq 2$, $gcd(3,l_1)=1$ and $m\geq 0$. Then, we have
$$
\varepsilon_2(n,l)=
  \begin{cases}
   n^{3l}-n^l-\pi_n(3l)  & \text{if $l_1$ is odd,} \\
   n^{3l}-n^l-\pi_n(3l)-\pi_n(\frac{3l}{2}) & \text{if $l_1$ is even}.
  \end{cases}
$$
\end{proposition}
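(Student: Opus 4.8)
The plan is to begin with the exact formula for $\varepsilon_2(n,l)$ furnished by Proposition~\ref{Countp=2q-odd}, namely
$$\varepsilon_2(n,l)=\sum_{d\in \Lambda(l)}\pi_n\!\left(\frac{3l}{d}\right),$$
and to recognize the index set $\Lambda(l)$ as a set of divisors to which Corollary~\ref{corollary1} can be applied. The one genuinely conceptual step is the following observation: since $l=3^m l_1$ with $\gcd(3,l_1)=1$, a positive integer $d$ divides $l$ and satisfies $d\not\equiv 0\pmod 3$ if and only if $d$ divides $l_1$. Indeed, every divisor of $l$ has the form $3^a e$ with $0\le a\le m$ and $e\mid l_1$, and it is coprime to $3$ exactly when $a=0$. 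Hence $\Lambda(l)=\{d:\ d\mid l_1,\ d\ge 4\}$, the congruence condition in the definition of $\Lambda(l)$ becoming automatic once we restrict to divisors of $l_1$.

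Next I would complete this restricted sum to a full divisor sum. Writing $\sum_{d\in\Lambda(l)}=\sum_{d\mid l_1}-\sum_{d\mid l_1,\,d<4}$, I must add back the terms indexed by the divisors of $l_1$ that are smaller than $4$. Because $\gcd(3,l_1)=1$, the value $d=3$ is never a divisor of $l_1$, so only $d=1$ and possibly $d=2$ occur among these: $d=1$ always contributes $\pi_n(3l)$, while $d=2$ contributes $\pi_n(3l/2)$ precisely when $l_1$ (equivalently $l$) is even. This gives
$$\varepsilon_2(n,l)=\sum_{d\mid l_1}\pi_n\!\left(\frac{3l}{d}\right)-\pi_n(3l)-\delta\,\pi_n\!\left(\frac{3l}{2}\right),$$
where $\delta=1$ if $l_1$ is even and $\delta=0$ otherwise.

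Finally I would evaluate the complete sum by invoking Corollary~\ref{corollary1} with the prime $\mathbf{r}=3$, with its integer parameter taken to be $m$, and with the role of $l$ played by $l_1$; this is legitimate since $\gcd(l_1,3)=1$. Using $3l=3^{m+1}l_1$ and $l=3^m l_1$, the corollary yields $\sum_{d\mid l_1}\pi_n(3l/d)=n^{3l}-n^l$. Substituting into the previous display and splitting on the parity of $l_1$ produces the two claimed expressions.

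As for difficulty, the argument is essentially bookkeeping once the reduction $\Lambda(l)=\{d\mid l_1:\ d\ge 4\}$ is made; the only points requiring care are the correct accounting of which small divisors must be subtracted --- remembering that $d=3$ cannot occur because $3\nmid l_1$ and that $d=2$ occurs if and only if $l_1$ is even --- and checking that the hypotheses of Corollary~\ref{corollary1} are indeed met after the substitution $l\mapsto l_1$, $\mathbf{r}=3$.
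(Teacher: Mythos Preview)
Your argument is correct and follows exactly the same route as the paper: both proofs identify $\Lambda(l)$ with the divisors of $l_1$ that are at least $4$, complete the sum over all divisors of $l_1$ by adding back the terms for $d=1$ (and $d=2$ when $l_1$ is even), and evaluate that full sum via Corollary~\ref{corollary1} with $\mathbf{r}=3$. The only difference is that you spell out the identification $\Lambda(l)=\{d\mid l_1:\ d\ge 4\}$ explicitly, whereas the paper leaves this step implicit.
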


\begin{proof}
According to Corollary \ref{corollary1}, we obtain
 $$\sum_{d|l_1} \pi_n\left( \frac{3^{m+1}l_1}{d}\right)=n^{3^{m+1}l_1}-n^{3^m l_1}.$$
Inserting the last relation in the expression of $\varepsilon_2(n,l)$ given by Proposition \ref{Countp=2q-odd}, we get
\begin{equation*}
\varepsilon_2(n,3^ml_1)=\sum_{d|l_1} \pi_n\left( \frac{3^{m+1}l_1}{d}\right) -\pi_n\left(3^{m+1}l_1\right)=n^{3l}-n^l-\pi_n(3l),
\end{equation*}
if $l_1$ is odd, and
$$
\varepsilon_2(n,3^ml_1)=\sum_{d|l_1} \pi_n\left( \frac{3^{m+1}l_1}{d}\right) -\pi_n(3^{m+1}l_1)-\pi_n\left(\frac{3^{m+1}l_1}{2}\right)=n^{3l}-n^l-\pi_n(3l)-\pi_n\left(\frac{3l}{2}\right)\!,$$
if $l_1$ is even.
\end{proof}


\begin{proposition}\label{prop2}
Let $l=3^{m}l_1$ be an even positive integer, with $gcd(3,l_1)=1$ and $m\geq 0$. Then, we have
$$
\varepsilon_1(n,l)=
  n^{\frac{l}{2}}\left(\pi_n(l)+1\right)+ \pi_n\left(\frac{3l}{2}\right)- n^{\frac{3l}{2}}.
$$
\end{proposition}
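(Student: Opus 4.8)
The plan is to start from the closed form for $\varepsilon_1(n,l)$ already established in Proposition \ref{Countp=2q-even}, namely
$$\varepsilon_1(n,l) = n^{\frac{l}{2}}\pi_n(l) - \sum_{d \in \Lambda^+(l)} \pi_n\!\left(\tfrac{3l}{d}\right),$$
and to evaluate the remaining sum over $\Lambda^+(l)$ in closed form by means of Corollary \ref{corollary1}. The first step is a purely arithmetic reduction of the index set. Since $l = 3^m l_1$ is even and $3^m$ is odd, $l_1$ is necessarily even; moreover every divisor $d$ of $l$ with $3 \nmid d$ must divide $l_1$, so $\Lambda^+(l)$ is exactly the set of even divisors of $l_1$ that are $\geq 4$. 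As the unique even divisor of $l_1$ smaller than $4$ is $2$ itself, this set is precisely the collection of all even divisors of $l_1$ with the divisor $2$ deleted.

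Next I would recast the sum over even divisors as a sum over a smaller modulus coprime to $3$. Writing each even divisor $d$ of $l_1$ as $d = 2e$, the assignment $d \mapsto e$ is a bijection onto the set of divisors of $l_1' := l_1/2$, and under it $3l/d = (3l/2)/e = 3^{m+1}l_1'/e$. The key observation is that $\gcd(3, l_1') = 1$, since $l_1'$ divides $l_1$ and $\gcd(3, l_1) = 1$. Hence
$$\sum_{\substack{d \mid l_1 \\ d \text{ even}}} \pi_n\!\left(\frac{3l}{d}\right) = \sum_{e \mid l_1'} \pi_n\!\left(\frac{3^{m+1} l_1'}{e}\right),$$
and the right-hand side is exactly the quantity evaluated by Corollary \ref{corollary1} (applied with $\mathbf{r}=3$ and $l_1'$ in place of $l$), which gives $n^{3^{m+1}l_1'} - n^{3^m l_1'} = n^{\frac{3l}{2}} - n^{\frac{l}{2}}$.

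Finally I would subtract the single omitted term $d = 2$, which contributes $\pi_n(3l/2)$, to conclude that $\sum_{d \in \Lambda^+(l)} \pi_n(3l/d) = n^{\frac{3l}{2}} - n^{\frac{l}{2}} - \pi_n(\frac{3l}{2})$. Substituting this back into the formula from Proposition \ref{Countp=2q-even} and collecting the $n^{l/2}$ contributions yields $\varepsilon_1(n,l) = n^{\frac{l}{2}}(\pi_n(l)+1) + \pi_n(\frac{3l}{2}) - n^{\frac{3l}{2}}$, as claimed. The only delicate point, and the step I would verify most carefully, is the bookkeeping of the index sets: that the divisors of $l$ prime to $3$ coincide with the divisors of $l_1$, that $d \mapsto d/2$ genuinely lands among the divisors of $l_1/2$ (which relies on $l_1$ being even), and that $2$ is the sole even divisor removed by the threshold $d \geq 4$. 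I would also check the boundary case $l_1 = 2$ (so $l_1' = 1$), where the sum degenerates to the single term $\pi_n(3^{m+1}) = n^{3^{m+1}} - n^{3^m}$ and Corollary \ref{corollary1} still applies.
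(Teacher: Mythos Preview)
Your proof is correct and follows essentially the same route as the paper: both start from Proposition~\ref{Countp=2q-even}, rewrite the sum over $\Lambda^+(l)$ as the sum over all even divisors of $l_1$ (via the substitution $d=2d_1$ with $d_1\mid l_1/2$) minus the single term $d=2$, and then evaluate the full sum with Corollary~\ref{corollary1}. Your write-up is more explicit about why $l_1$ must be even and why the divisors of $l$ coprime to $3$ coincide with those of $l_1$, but the argument is the same.
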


\begin{proof}
We have
\begin{align*}
\sum_{d\in\Lambda^{+}(3^{m}l_1)}\pi_n\left( \frac{3^{m+1}l_1}{d}\right)&=\sum_{d_1|\frac{l_1}{2}}\pi_n\left( \frac{3^{m+1}l_1}{2d_1}\right)-\pi_n\left( \frac{3^{m+1}l_1}{2}\right)\!.
\end{align*}
Now, using Corollary \ref{corollary1}, we obtain
\begin{align*}
\sum_{d\in\Lambda^{+}(3^{m}l_1)}\pi_n\left( \frac{3^{m+1}l_1}{d}\right)&=n^{3^{m+1}\frac{l_1}{2}}-n^{3^m\frac{l_1}{2}}-\pi_n\left( \frac{3^{m+1}l_1}{2}\right)\!.
\end{align*}
Finally, using Proposition \ref{Countp=2q-even}, we get the desired formula.
\end{proof}\medskip
Now, combining Propositions \ref{prop1} and \ref{prop2}, we have the following.
\begin{theorem}
Let $l=3^{m}l_1$ be an even positive integer, where  $l_1\geq 2$, $gcd(3,l_1)=1$ and $m\geq 0$. Then, we have
$$
\varepsilon(n,l)=
  \begin{cases}
   n^{3l}-n^l-\pi_n(3l)  & \text{if $l_1$ is odd,} \\
   n^{3l}+ n^{\frac{l}{2}}\left(\pi_n(l)+1\right)-n^l-\pi_n(3l)- n^{\frac{3l}{2}}& \text{if $l_1$ is even}.
  \end{cases}
$$
\end{theorem}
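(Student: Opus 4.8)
The plan is to reduce the statement to the disjoint decomposition $\mathcal{E}(\mathcal{A},l)=\mathcal{E}_1(\mathcal{A},l)\sqcup\mathcal{E}_2(\mathcal{A},l)$ recorded just after Theorem \ref{p=2q}, which gives at once
\[
\varepsilon(n,l)=\varepsilon_1(n,l)+\varepsilon_2(n,l).
\]
Since the two summands have already been put in closed form in Propositions \ref{prop1} and \ref{prop2}, what remains is to add these two expressions, organizing the work according to the parity of $l_1$.

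First I would record the elementary parity dictionary: writing $l=3^m l_1$ with $3^m$ odd, the integer $l$ is even exactly when $l_1$ is even, so the two cases of the conclusion are governed by the parity of $l_1$. I would also note that every element of $\mathcal{E}_1(\mathcal{A},l)$ comes from a factorization $p=xqx$ with $|q|=2|x|=l$, whence $\mathcal{E}_1(\mathcal{A},l)=\varnothing$ as soon as $l$ is odd. This disposes of the case $l_1$ odd: here $\varepsilon_1(n,l)=0$, so $\varepsilon(n,l)=\varepsilon_2(n,l)$, and the $l_1$ odd branch of Proposition \ref{prop1} gives $\varepsilon(n,l)=n^{3l}-n^l-\pi_n(3l)$.

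In the case $l_1$ even (equivalently $l$ even) I would feed both propositions into the sum. Proposition \ref{prop1} (its $l_1$ even branch) contributes $\varepsilon_2(n,l)=n^{3l}-n^l-\pi_n(3l)-\pi_n\!\left(\frac{3l}{2}\right)$, while Proposition \ref{prop2} contributes $\varepsilon_1(n,l)=n^{\frac{l}{2}}\bigl(\pi_n(l)+1\bigr)+\pi_n\!\left(\frac{3l}{2}\right)-n^{\frac{3l}{2}}$. Adding them, the two occurrences of $\pi_n\!\left(\frac{3l}{2}\right)$ cancel, and one is left with
\[
\varepsilon(n,l)=n^{3l}+n^{\frac{l}{2}}\bigl(\pi_n(l)+1\bigr)-n^l-\pi_n(3l)-n^{\frac{3l}{2}},
\]
which is precisely the asserted formula.

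The argument is essentially bookkeeping, so I do not expect a genuine obstacle; the one place demanding care is the cancellation of the $\pi_n\!\left(\frac{3l}{2}\right)$ terms, namely that the quantity subtracted inside $\varepsilon_2(n,l)$ is exactly the quantity added inside $\varepsilon_1(n,l)$, so that they annihilate and no surviving $\pi_n\!\left(\frac{3l}{2}\right)$ appears in the even case. Before concluding I would re-examine this cancellation together with the exponents $n^{\frac{l}{2}}$ and $n^{\frac{3l}{2}}$, since a single sign or exponent slip there is the only way the final identity could fail.
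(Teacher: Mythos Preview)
Your proposal is correct and follows exactly the paper's approach: the paper's entire proof is the single line ``combining Propositions \ref{prop1} and \ref{prop2}'', and you have simply written out that combination in full, including the observation that $\varepsilon_1(n,l)=0$ when $l$ (equivalently $l_1$) is odd and the cancellation of the $\pi_n(3l/2)$ terms in the even case.
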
\medskip


Now, we will give combinatorial forms of $\varepsilon_1(n,l)$ and  $\varepsilon_2(n,l)$. First,
for a set of positive integers $L=\{n_1,\ldots,n_m\}$, define
$$\mathfrak{p}(L)=\prod_{i=1}^{m}n_i,$$
with the convention $\mathfrak{p}(\emptyset)=1$.
If $l$ is a positive integer, then we denote by $\mathfrak{pf}(l)$ the set of all prime factors of $l$.

\begin{theorem}\label{theorem}
Let $l=3^m2^sl_1$, where $gcd(3,l_1)=gcd(2,l_1)=1$. Then the following properties hold.
\begin{enumerate}
    \item If $s\geq2$, then
    $$\varepsilon_2(n,l)=\underset{L\in \Gamma_1(l)}{\sum} (-1)^{|L|+1} n^{\frac{3l}{\mathfrak{p}(L)}},$$
    where $\Gamma_1(l)=\left\{L \subseteq \{3,4\}\cup \mathfrak{pf}(l_1)\colon (L\neq\emptyset)\wedge(L\neq\{3\})\right\}$.
    \item If $s\leq 1$, then
    $$\varepsilon_2(n,l)=\sum_{L\in \Gamma_2(l)} (-1)^{|L|+1} n^{\frac{3l}{\mathfrak{p}(L)}},$$ where
    $\Gamma_2(l):=\left\{L\subseteq \{3\}\cup \mathfrak{pf}(l_1)\colon (L\neq\emptyset)\wedge(L\neq\{3\})\right\}.$
\end{enumerate}
\end{theorem}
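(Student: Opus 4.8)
The plan is to start from the closed forms for $\varepsilon_2(n,l)$ already proved in Proposition \ref{prop1} and to rewrite every primitive-word count $\pi_n(N)$ occurring there as an alternating sum over the prime factors of $N$. The only tool needed is the following identity: since every nonempty word has a unique primitive root, one has $n^N=\sum_{d\mid N}\pi_n(d)$, so the M\"obius inversion formula gives $\pi_n(N)=\sum_{d\mid N}\mu(d)\,n^{N/d}$; as $\mu$ vanishes off the squarefree divisors and these are exactly the products $\mathfrak{p}(L)$ for $L\subseteq\mathfrak{pf}(N)$, this reads
$$\pi_n(N)=\sum_{L\subseteq \mathfrak{pf}(N)}(-1)^{|L|}\,n^{N/\mathfrak{p}(L)}.$$
Everything below is a careful expansion of this identity, writing $P:=\mathfrak{pf}(l_1)$ so that $2,3\notin P$.

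First I would translate Proposition \ref{prop1} into the present notation: its parameter is ``$l_1$ odd'' precisely when $s=0$ and ``$l_1$ even'' precisely when $s\ge 1$ (since its $l_1$ equals $2^s$ times the $l_1$ of this theorem). Thus $\varepsilon_2(n,l)=n^{3l}-n^{l}-\pi_n(3l)$ when $s=0$, and $\varepsilon_2(n,l)=n^{3l}-n^{l}-\pi_n(3l)-\pi_n(\tfrac{3l}{2})$ when $s\ge 1$. For $s=0$ we have $\mathfrak{pf}(3l)=\{3\}\cup P$, and expanding $\pi_n(3l)$ by the identity above, the subsets $L=\emptyset$ and $L=\{3\}$ contribute exactly $n^{3l}$ and $-n^{l}$, which cancel the leading terms; what remains is $-\sum_{L\neq\emptyset,\{3\}}(-1)^{|L|}n^{3l/\mathfrak{p}(L)}$, i.e. the $\Gamma_2$-form of part (2).

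The crux is the $s\ge 1$ case, where I must combine $\pi_n(3l)+\pi_n(\tfrac{3l}{2})$. Here $\mathfrak{pf}(3l)=\{2,3\}\cup P$, so I index each $L\subseteq\{2,3\}\cup P$ by its trace $L'=L\cap(\{3\}\cup P)$ and the flag $2\in L$; the subsets containing $2$ produce terms $n^{3l/(2\mathfrak{p}(L'))}$ with an extra sign. The key point is that this exact family of ``half-length'' terms reappears, with opposite sign, inside $\pi_n(\tfrac{3l}{2})$ and telescopes away. When $s=1$ the length $\tfrac{3l}{2}$ is odd, so $\mathfrak{pf}(\tfrac{3l}{2})=\{3\}\cup P$ and the cancellation leaves $\sum_{L\subseteq\{3\}\cup P}(-1)^{|L|}n^{3l/\mathfrak{p}(L)}$, again the $\Gamma_2$-form. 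When $s\ge 2$ the length $\tfrac{3l}{2}$ is still even, so $2\in\mathfrak{pf}(\tfrac{3l}{2})$ and a residual family $n^{3l/(4\mathfrak{p}(L'))}$ survives; the re-indexing $L'\mapsto L'\cup\{4\}$ (allowed since $4\notin\{3\}\cup P$, with sign flip $(-1)^{|L'|}=-(-1)^{|L'\cup\{4\}|}$) repackages these as subsets of $\{3,4\}\cup P$ containing the formal symbol $4$, yielding
$$\pi_n(3l)+\pi_n\!\left(\tfrac{3l}{2}\right)=\sum_{M\subseteq\{3,4\}\cup P}(-1)^{|M|}\,n^{3l/\mathfrak{p}(M)}.$$
Stripping the $M=\emptyset$ and $M=\{3\}$ terms against $n^{3l}-n^{l}$ then gives the $\Gamma_1$-form of part (1).

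I expect the only genuine difficulty to be bookkeeping: keeping the signs correct through the re-indexing maps $L'\mapsto L'\cup\{2\}$ and $L'\mapsto L'\cup\{4\}$, and confirming that the intermediate half-length terms cancel rather than accumulate. The conceptual heart—that halving the length once converts the prime $2$ into the formal symbol $4$ exactly when a factor of $2$ remains, and erases it entirely otherwise—is what explains the dichotomy between $\{3,4\}\cup P$ for $s\ge 2$ and $\{3\}\cup P$ for $s\le 1$, and making that dichotomy rigorous is the step deserving the most care.
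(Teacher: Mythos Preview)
Your proposal is correct and follows the same overall strategy as the paper: start from Proposition~\ref{prop1}, expand each $\pi_n(\cdot)$ via M\"obius inversion, and regroup into the stated combinatorial sums. The organization differs slightly. The paper applies Lemma~\ref{lemma1} (M\"obius inversion with the prime $3$ already isolated) to write $\pi_n(3l)$ and $\pi_n(\tfrac{3l}{2})$ as sums $\sum_{d\mid l_1}\mu(d)(\cdots)$, then splits explicitly into four sums according to whether the eventual subset contains $3$ and/or $4$, matching each to a piece of $\Gamma_1(l)$ by hand. You instead use the subset-indexed identity $\pi_n(N)=\sum_{L\subseteq\mathfrak{pf}(N)}(-1)^{|L|}n^{N/\mathfrak{p}(L)}$ from the start and let the telescoping between $\pi_n(3l)$ and $\pi_n(\tfrac{3l}{2})$ do the bookkeeping, with the re-indexing $L'\mapsto L'\cup\{4\}$ explaining in one stroke why the symbol $4$ appears exactly when $s\ge 2$. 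The two computations are line-for-line equivalent; your version is a bit more conceptual and avoids the intermediate Lemma~\ref{lemma1}, while the paper's version makes the four constituent sums visible before recombination.
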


\begin{proof}\textrm{\; }

\noindent $(1)$ Suppose that $s\geq2$. According to Proposition \ref{prop1}, we have
$$\varepsilon_2(n,l)=
n^{3l}-n^l-\pi_n(3l)-\pi_n(\frac{3l}{2}).
$$
Now, applying Lemma \ref{lemma1} to $\pi_n(3l)$ and $\pi_n\left(\frac{3l}{2}\right)$, we obtain
\begin{align*}
\pi_n(3l)&=\sum_{d|2^{s}l_1}\mu(d)\left(  n^{\frac{3l}{d}}- n^{\frac{l}{d}}\right)\\
&=\sum_{d|l_1}\mu(d)\left(  n^{\frac{3l}{d}}- n^{\frac{l}{d}}\right)+\sum_{d|l_1}\mu(2d)\left(  n^{\frac{3l}{2d}}- n^{\frac{l}{2d}}\right)\\
&=\sum_{d|l_1}\mu(d)\left(  n^{\frac{3l}{d}}- n^{\frac{l}{d}}\right)-\sum_{d|l_1}\mu(d)\left(  n^{\frac{3l}{2d}}- n^{\frac{l}{2d}}\right)
\end{align*}
and
\begin{align*}
\pi_n\left(\frac{3l}{2}\right)&=\pi_n(3^{m+1}2^{s-1}l_1)= \sum_{d|l_1}\mu(d)\left(  n^{\frac{3l}{2d}}- n^{\frac{l}{2d}}\right)-\sum_{d|l_1}\mu(d)\left(n^{\frac{3l}{4d}}- n^{\frac{l}{4d}}\right).
\end{align*}
As a result,
\begin{align*}
\varepsilon_2(n,l)&=-\underset{d|l_1,d\not =1}{\sum}\mu(d)\left(n^{\frac{3l}{d}}- n^{\frac{l}{d}}\right)+\sum_{d|l_1}\mu(d)\left(n^{\frac{3l}{4d}}- n^{\frac{l}{4d}}\right) \\
&= - \underset{d|l_1,d\not =1}{\sum} \mu(d) n^{\frac{3l}{d}}+\underset{d|l_1,d\not =1}{\sum} \mu(d) n^{\frac{3l}{3d}}+\underset{d|l_1}{\sum}\mu(d) n^{\frac{3l}{4d}}-\underset{d\mid l_1}{\sum} \mu(d) n^{\frac{3l}{12d}}\!.
\end{align*}
Every term of the previous sum can be written in a combinatorial form as follows
$$\begin{array}{rll}
 -\underset{d|l_1,\; d\not =1}{\sum} \mu(d) n^{\frac{3l}{d}}&= &\underset{3,4\notin L\in \Gamma_1(l)}{\sum}(-1)^{|L|+1} n^{\frac{3l}{\mathfrak{p}(L)}},\\
\underset{d|l_1,\; d\not =1}{\sum} \mu(d) n^{\frac{3l}{3d}}&=& \underset{3\in L,4\notin L\in \Gamma_1(l)}{\sum} (-1)^{|L|+1} n^{\frac{3l}{\mathfrak{p}(L)}}, \\
\underset{d|l_1}{\sum} \mu(d) n^{\frac{3l}{4d}}&=& \underset{4\in L,3\notin L\in \Gamma_1(l)}{\sum} (-1)^{|L|+1} n^{\frac{3l}{\mathfrak{p}(L)}}, \\
-\underset{d|l_1}{\sum} \mu(d) n^{\frac{3l}{12d}}&=& \underset{3,4\in  L\in \Gamma_1(l)}{\sum} (-1)^{|L|+1} n^{\frac{3l}{\mathfrak{p}(L)}}.\\
\end{array}$$
Therefore, $\varepsilon_2(n,l)=\underset{L\in \Gamma_1(l)}{\sum} (-1)^{|L|+1} n^{\frac{3l}{\mathfrak{p}(L)}}$.

\noindent $(2)$ Suppose that $s\leq 1$, then two cases have to be considered.
\begin{itemize}
\item Assume that $s=1$.  Hence
$$\pi_n\left(\frac{3l}{2}\right)=\sum_{d|l_1} \mu(d)\left(  n^{\frac{3l}{2d}}- n^{\frac{l}{2d}}\right).$$
Thus, the expression $\varepsilon_2(n,l)$ is equal to
\begin{align*}
&n^{3l}-n^l -\sum_{d|l_1}\mu(d)\left( n^{\frac{3l}{d}}- n^{\frac{l}{d}}\right)+\sum_{d|l_1}\mu(d)\left(  n^{\frac{3l}{2d}}- n^{\frac{l}{2d}}\right)-\sum_{d|l_1} \mu(d)\left(  n^{\frac{3l}{2d}}- n^{\frac{l}{2d}}\right)\\
 =&\,n^{3l}-n^l -\sum_{d|l_1}\mu(d)\left( n^{\frac{3l}{d}}- n^{\frac{l}{d}}\right)\\
 =&-\underset{d\not=1,d|l_1}{\sum} \mu(d)n^{\frac{3l}{d}}+\underset{d\not=1,d|l_1}{\sum} \mu(d)n^{\frac{l}{d}}\\
 =&\underset{3\not\in L\in \Gamma_2(l)}{\sum} (-1)^{|L|+1} n^{\frac{3l}{\mathfrak{p}(L)}}+\underset{3\in L\in \Gamma_2(l)}{\sum} (-1)^{|L|+1} n^{\frac{3l}{\mathfrak{p}(L)}}\\
 =&\underset{L\in \Gamma_2(l)}{\sum} (-1)^{|L|+1} n^{\frac{3l}{\mathfrak{p}(L)}}.
\end{align*}
\item Now, assume that $s=0$. In this case, we have
$$\varepsilon_2(n,l)= n^{3l}-n^{l}-\pi_n(3l)
=-\sum_{d|l_1,d\not= 1}\mu(d)\left( n^{\frac{3l}{d}}- n^{\frac{l}{d}}\right)
=\underset{L\in \Gamma_2(l)}{\sum} (-1)^{|L|+1} n^{\frac{3l}{\mathfrak{p}(L)}}.
$$\end{itemize}\end{proof}

\begin{theorem}\label{theorem1}Let $l=3^m2^sl_1$ be an even positive integer, where $gcd(3,l_1)=gcd(2,l_1)~=~1$. Then, we have
    $$\varepsilon_1(n,l)=\underset{L\subseteq\mathfrak{pf}(l)}{\sum} (-1)^{|L|} n^{\frac{l}{\mathfrak{p}(L)}+\frac{l}{2}}+\underset{L\in \Gamma_2(l)}{\sum} (-1)^{|L|} n^{\frac{l}{2\mathfrak{p}(L)}}.$$
\end{theorem}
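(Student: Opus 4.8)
\noindent The plan is to begin from the closed form established in Proposition \ref{prop2},
\[\varepsilon_1(n,l)=n^{\frac{l}{2}}\bigl(\pi_n(l)+1\bigr)+\pi_n\!\left(\tfrac{3l}{2}\right)-n^{\frac{3l}{2}},\]
and to turn each piece into a signed sum over subsets of prime factors, in the same spirit as the treatment of $\varepsilon_2$ in Theorem \ref{theorem}. The one tool I would use throughout is the expansion of the primitive-word count as a squarefree M\"obius sum,
\[\pi_n(k)=\sum_{d\mid k}\mu(d)\,n^{\frac{k}{d}}=\sum_{L\subseteq\mathfrak{pf}(k)}(-1)^{|L|}\,n^{\frac{k}{\mathfrak{p}(L)}},\]
which follows from the M\"obius Inversion Formula applied to the classical identity $n^{k}=\sum_{d\mid k}\pi_n(d)$ (or by iterating Lemma \ref{lemma1}); the second equality only records that $\mu$ is supported on squarefree divisors, so that $d=\mathfrak{p}(L)$ ranges over the subsets $L\subseteq\mathfrak{pf}(k)$ with $\mu(\mathfrak{p}(L))=(-1)^{|L|}$.

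The first summand is then immediate: applying this expansion with $k=l$ and multiplying by $n^{l/2}$ yields
\[n^{\frac{l}{2}}\pi_n(l)=\sum_{L\subseteq\mathfrak{pf}(l)}(-1)^{|L|}\,n^{\frac{l}{\mathfrak{p}(L)}+\frac{l}{2}},\]
which is exactly the first sum in the statement. It therefore remains to identify the leftover block $n^{l/2}+\pi_n(3l/2)-n^{3l/2}$ with the $\Gamma_2(l)$-sum.

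For this I would expand $\pi_n(3l/2)$ by the same formula, now over $\mathfrak{pf}(3l/2)$. Because $3l/2=3^{m+1}2^{s-1}l_1$ always carries the factor $3$, the prime $3$ belongs to $\mathfrak{pf}(3l/2)$; the subset $L=\emptyset$ contributes $n^{3l/2}$, while $L=\{3\}$ contributes $-n^{l/2}$ since $(3l/2)/3=l/2$. Thus the two correction terms $-n^{3l/2}$ and $+n^{l/2}$ in Proposition \ref{prop2} are precisely what deletes these two extremal contributions, leaving
\[n^{\frac{l}{2}}+\pi_n\!\left(\tfrac{3l}{2}\right)-n^{\frac{3l}{2}}=\sum_{\substack{L\subseteq\mathfrak{pf}(3l/2)\\ L\neq\emptyset,\ L\neq\{3\}}}(-1)^{|L|}\,n^{\frac{3l}{2\mathfrak{p}(L)}}.\]
When $s=1$ one has $\mathfrak{pf}(3l/2)=\{3\}\cup\mathfrak{pf}(l_1)$, so the index set is exactly $\Gamma_2(l)$ and the right-hand side is the desired second sum.

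The main obstacle is the prime $2$. As soon as $s\geq 2$ the factor $2^{s-1}$ survives in $3l/2$, so $2\in\mathfrak{pf}(3l/2)$ and the index set above is strictly larger than the universe $\{3\}\cup\mathfrak{pf}(l_1)$ of $\Gamma_2(l)$; the subsets $L$ containing $2$ generate extra terms $(-1)^{|L|}n^{(3l/2)/\mathfrak{p}(L)}$ with $2\mid\mathfrak{p}(L)$. I would dispose of these exactly as in the proof of Theorem \ref{theorem}, splitting into the cases $s=1$ and $s\geq 2$ and peeling off the $2$-part by Lemma \ref{lemma1} with $\mathbf{r}=2$ (or Corollary \ref{corollary1}) before regrouping. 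This $2$-adic bookkeeping is the delicate step, and it is also where the precise index set and exponent of the second sum must be reconciled with the displayed formula, the natural computation producing the exponent $n^{\frac{3l}{2\mathfrak{p}(L)}}$ and an index set governed by $\mathfrak{pf}(3l/2)$.
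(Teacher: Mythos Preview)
Your strategy coincides with the paper's: start from Proposition~\ref{prop2}, turn $n^{l/2}\pi_n(l)$ into the first sum via the M\"obius expansion $\pi_n(k)=\sum_{L\subseteq\mathfrak{pf}(k)}(-1)^{|L|}n^{k/\mathfrak{p}(L)}$, and then rewrite the block $n^{l/2}+\pi_n(3l/2)-n^{3l/2}$ by applying Lemma~\ref{lemma1} with $\mathbf r=3$ to $\pi_n(3l/2)$ and cancelling the $d=1$ contribution. Up to bookkeeping, your argument for $s=1$ \emph{is} the paper's proof.

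The obstacle you isolate is real, and it is a defect of the \emph{statement}, not of your reasoning. First, the displayed exponent should be $\tfrac{3l}{2\mathfrak p(L)}$, not $\tfrac{l}{2\mathfrak p(L)}$: already for $l=10$ and $L=\{3,5\}\in\Gamma_2(10)$ the latter equals $1/3$, whereas your computation (and a direct evaluation of Proposition~\ref{prop2}, which gives $\varepsilon_1(n,10)=n^{15}-n^{10}-n^{7}+n^{6}-n^{3}+n$) forces the second block to be $-n^{3}+n$, matching $\tfrac{3l}{2\mathfrak p(L)}$. Second, for $s\ge 2$ the index set $\Gamma_2(l)\subseteq\{3\}\cup\mathfrak{pf}(l_1)$ is genuinely too small: for $l=4$ one has $\Gamma_2(4)=\emptyset$, yet Proposition~\ref{prop2} yields $\varepsilon_1(n,4)=n^{6}-n^{4}-n^{3}+n$, so the second block $-n^{3}+n$ is nonzero. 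Your ``natural'' formula, indexed by $\{L\subseteq\mathfrak{pf}(3l/2):L\neq\emptyset,\ L\neq\{3\}\}$ with exponent $\tfrac{3l}{2\mathfrak p(L)}$, is the one that is correct for all $s\ge 1$.

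The paper's short proof glosses over exactly this point: its expansion of $\pi_n(3l/2)$ sums only over $d\mid l_1$ (which is Lemma~\ref{lemma1} for $s=1$; for $s\ge 2$ the sum must run over $d\mid 2^{s-1}l_1$), and the printed exponents drop the factor~$2$. So the ``reconciliation'' you anticipate cannot succeed by a case split \`a la Theorem~\ref{theorem}: the target formula is simply wrong for $s\ge 2$, and the correct target is the one you derived.
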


\begin{proof}Using Lemma \ref{lemma1} and Proposition \ref{prop2}, we have
\begin{align*}
\varepsilon_1(n,l)=&\, n^{\frac{l}{2}}\left(\pi_n(l)+1\right)+ \pi_n\left(\frac{3l}{2}\right)- n^{\frac{3l}{2}}\\
=&\,n^{\frac{l}{2}}\pi_n(l)-\left(n^{\frac{3l}{2}}-n^{\frac{l}{2}}\right)+\sum_{d|l_1}\mu(d)\left( n^{\frac{3l}{d}}- n^{\frac{l}{d}}\right)\\
=&\,n^{\frac{l}{2}}\sum_{d|l}\mu(d)n^{\frac{l}{d}}+\sum_{d|l_1,d\not=1}\mu(d)\left( n^{\frac{3l}{d}}- n^{\frac{l}{d}}\right)\\
=&\underset{L\subseteq\mathfrak{pf}(l)}{\sum} (-1)^{|L|} n^{\frac{l}{\mathfrak{p}(L)}+\frac{l}{2}}+\underset{L\in \Gamma_2(l)}{\sum} (-1)^{|L|} n^{\frac{l}{2\mathfrak{p}(L)}}.
\end{align*}
\end{proof}


\begin{example}\rm Let $m\geq0$ and $s\geq1$ be integers. The following examples are direct consequences of the combinatorial forms in Theorems \ref{theorem} and \ref{theorem1}.
\begin{enumerate}
\item We have
$$\varepsilon_1(n,3^m2^s)=n^{\frac{3l}{2}}+n^{\frac{2l}{3}}-n^{l}-n^{\frac{5l}{6}}$$
and
$$
\varepsilon_2(n,3^m2^s)=
  \begin{cases}
   0  & \text{if } s=1, \\
   n^{3^{m+1}2^{s-2}}- n^{3^{m}2^{s-2}}& \text{if }s\geq2.
  \end{cases}
$$
\item Let $p\geq5$ be prime. Then, we have $\varepsilon_1(n,3^mp^s)=0$
and
$$
\varepsilon_2(n,3^mp^s)=  n^{3^{m+1}p^{s-1}}- n^{3^{m}p^{s-1}}.
$$
\end{enumerate}
\end{example}


\section{Asymptotic behaviors of $\varepsilon_1(n,l)$ and $\varepsilon_2(n,l)$}\label{sec4}

Let $l$ be a positive integer such that $\Lambda(l)\not=\emptyset$. It is clear, from the combinatorial description of $\varepsilon_2(n,l)$, that
$$
\varepsilon_2(n,l) \sim n^{\frac{3l}{\delta(l)}} \mbox{ as } n \to \infty,
$$
where $\delta(l)=\min\Lambda(l)$. Let $l$ be an even number. Then, according to the combinatorial form of
$\varepsilon_1(n,l)$,  we have
$$\varepsilon_1(n,l) \sim n^{\frac{3l}{2}}\mbox{ as } n \to \infty.$$

Another point of interest is the asymptotic behaviors of $\varepsilon_1(n,l)$ and $\varepsilon_2(n,l)$, as $l\rightarrow\infty$ for a fixed $n$.
Let us recall the following.
\begin{lemma}[\cite{Adel-Echi-Kroumi}]\label{lemma4}
We have $\pi_n(l)\sim n^l$, as $l$ goes to $\infty$.
\end{lemma}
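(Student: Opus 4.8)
The plan is to reduce the statement to the classical counting formula for primitive words and then control the lower-order terms. First I would record the fundamental identity
$$n^l = \sum_{d \mid l} \pi_n(d),$$
which follows from part $(2)$ of Lemma \ref{Primitiveroot}: every word $u$ of length $l$ has a unique primitive root $\sqrt{u}$, whose length $d$ necessarily divides $l$, and conversely each primitive word $w$ of length $d \mid l$ arises as the root of exactly one word of length $l$, namely $w^{l/d}$. Grouping the $n^l$ words of length $l$ according to the length of their primitive root yields the identity, and for the same reason it holds with $l$ replaced by any $m \mid l$.

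Next I would apply the M\"obius inversion formula with $g(m) = n^m$ and $f(m) = \pi_n(m)$, whose defining relation $g(m) = \sum_{d\mid m} f(d)$ holds for every $m \mid l$ by the previous step. This gives the explicit expression
$$\pi_n(l) = \sum_{d \mid l} \mu(d)\, n^{l/d} = n^l + \sum_{\substack{d \mid l \\ d > 1}} \mu(d)\, n^{l/d}.$$
The term corresponding to $d = 1$ is exactly $n^l$, so the asymptotic reduces to showing that the remaining sum is negligible compared to $n^l$.

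The decisive observation is that every divisor $d > 1$ of $l$ satisfies $l/d \le l/2$, so each surviving exponent lies in $\{1, 2, \ldots, \lfloor l/2 \rfloor\}$, with distinct divisors giving distinct exponents. Bounding $|\mu(d)| \le 1$ and comparing with a full geometric sum, I would estimate
$$\left| \pi_n(l) - n^l \right| \le \sum_{\substack{d \mid l \\ d > 1}} n^{l/d} \le \sum_{j=1}^{\lfloor l/2 \rfloor} n^{j} \le \frac{n}{n-1}\, n^{l/2}.$$
Dividing by $n^l$ then yields $\left| \pi_n(l)/n^l - 1 \right| \le \frac{n}{n-1}\, n^{-l/2}$, which tends to $0$ as $l \to \infty$; this is precisely the claim $\pi_n(l) \sim n^l$.

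Since the argument is elementary once the inversion formula is in place, there is no serious obstacle. The only points requiring care are that the hypotheses of the M\"obius inversion formula must be checked for all $m \mid l$ rather than for $m = l$ alone (automatic, as noted above), and that the quantitative heart of the proof is the trivial but essential inequality $l/d \le l/2$ for $d > 1$, which forces all correction terms to be of order at most $n^{l/2}$ and hence exponentially smaller than the main term $n^l$.
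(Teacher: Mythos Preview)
Your argument is correct: the identity $n^l=\sum_{d\mid l}\pi_n(d)$ together with M\"obius inversion gives $\pi_n(l)=\sum_{d\mid l}\mu(d)\,n^{l/d}$, and bounding the terms with $d>1$ by the geometric sum up to $n^{\lfloor l/2\rfloor}$ yields $|\pi_n(l)-n^l|\le \frac{n}{n-1}\,n^{l/2}=o(n^l)$, which is exactly the claim.

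As for comparison with the paper: the present paper does not supply its own proof of this lemma but simply quotes it from \cite{Adel-Echi-Kroumi}. Your write-up is the standard self-contained derivation (the well-known M\"obius formula for the number of primitive words), and is presumably the same argument used in that reference; in any case it fits seamlessly here since the M\"obius inversion formula and Lemma~\ref{Primitiveroot}(2) are already stated in the paper.
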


\begin{proposition}\label{proposition5}
	As $l$ is even and goes to infinity, we have
	$$
	\varepsilon_1(n,l)\sim n^{\frac{3l}{2}}.
	$$
\end{proposition}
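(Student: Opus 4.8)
The plan is to start from the closed-form expression for $\varepsilon_1(n,l)$ obtained in Proposition \ref{prop2}, namely
$$\varepsilon_1(n,l)= n^{\frac{l}{2}}\left(\pi_n(l)+1\right)+ \pi_n\!\left(\tfrac{3l}{2}\right)- n^{\frac{3l}{2}},$$
and to normalize by the conjectured leading term $n^{3l/2}$. Using $n^{l/2}\cdot n^{l}=n^{3l/2}$ to rewrite the first summand, division gives
$$\frac{\varepsilon_1(n,l)}{n^{3l/2}}=\frac{\pi_n(l)}{n^{l}}+n^{-l}+\frac{\pi_n\!\left(\frac{3l}{2}\right)}{n^{3l/2}}-1.$$
The whole problem then reduces to computing the limit of each of these four terms as $l\to\infty$ through even values.

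The key ingredient is Lemma \ref{lemma4}, which asserts $\pi_n(k)\sim n^k$ as $k\to\infty$; I would apply it twice. Applied directly, it gives $\pi_n(l)/n^{l}\to 1$. Applied with argument $k=\tfrac{3l}{2}$ — legitimate because $l$ is even, so $\tfrac{3l}{2}$ is a positive integer tending to infinity — it gives $\pi_n\!\left(\tfrac{3l}{2}\right)/n^{3l/2}\to 1$. The middle term $n^{-l}$ visibly tends to $0$ since $n\ge 2$ is fixed. Collecting the four limits, the normalized ratio tends to $1+0+1-1=1$, which is exactly the assertion $\varepsilon_1(n,l)\sim n^{3l/2}$.

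The only point requiring a moment's care is that three of the summands are individually of order $n^{3l/2}$, so the conclusion hinges on their coefficients combining to the nonzero value $1+1-1=1$ rather than cancelling; the displayed normalization makes this transparent, and no finer error analysis is needed. As an independent check one can instead invoke the combinatorial form of Theorem \ref{theorem1}: there the summand indexed by $L=\emptyset$ in the first sum equals exactly $n^{3l/2}$, while every remaining summand has exponent at most $l$ (the next largest being the $L=\{2\}$ term of exponent $l$, since $2\in\mathfrak{pf}(l)$), and the total number of summands is $O(l)$ because $2^{\omega(l)}\le\prod_{p\mid l}p\le l$. Hence $|\varepsilon_1(n,l)-n^{3l/2}|=O(l\,n^{l})=o(n^{3l/2})$, recovering the same asymptotic. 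I expect the first route to be the one to present, as it is shortest and isolates Lemma \ref{lemma4} as the sole nontrivial input.
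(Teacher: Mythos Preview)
Your argument is correct. Proposition~\ref{prop2} applies to every even $l$ (any $l$ factors uniquely as $3^m l_1$ with $\gcd(3,l_1)=1$, and $l$ even forces $l_1$ even), so the closed form is available, and your normalization by $n^{3l/2}$ together with two applications of Lemma~\ref{lemma4} yields the limit $1+0+1-1=1$ cleanly.

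The paper proceeds differently: it starts from the earlier formula of Proposition~\ref{Countp=2q-even},
\[
\varepsilon_1(n,l)=n^{l/2}\pi_n(l)-\sum_{d\in\Lambda^+(l)}\pi_n\!\left(\tfrac{3l}{d}\right),
\]
and bounds the correction sum crudely by $l\cdot\pi_n(3l/4)\le l\,n^{3l/4}=o(n^{3l/2})$, using only that every $d\in\Lambda^+(l)$ satisfies $d\ge 4$. The paper's route is more self-contained in that it does not rely on the M\"obius-based simplification of Proposition~\ref{prop2}; yours is slicker because that simplification has already absorbed the sum, leaving pure limit arithmetic with no error estimate needed. Your alternative check via Theorem~\ref{theorem1} is also sound and in spirit closer to the paper's bounding argument, though it invokes a result proved later than Proposition~\ref{proposition5} in the text.
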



\begin{proof}
	Note that if $d\in\Lambda^{+}(l)$, then we have $4\leq d\leq l$. This leads to
	$$
	\sum_{d\in \Lambda^+(l)}\pi_{n}\left(\frac{3l_k}{d}\right)
	\leq\sum_{d\in \Lambda^+(l)}\pi_{n}\left(\frac{3l}{4}\right)\leq l\pi_{n}\left(\frac{3l}{4}\right)\leq n^{\frac{3l}{4}}l.
	$$
	Thus
	$$
	\sum_{d\in \Lambda^+(l)}\pi_{n}\left(\frac{3l}{d}\right)=o\left(n^{\frac{3l}{2}}\right)\!.
	$$
	Using Lemma \ref{lemma4}, we get
	$$
	\varepsilon_1(n,l)=n^{\frac{l}{2}} \pi_n(l)-\underset{d\in \Lambda^+(l)}{\sum} \pi_n \left( \frac{3l}{d} \right)\sim n^{\frac{3l}{2}}\!.
	$$
\end{proof}

\begin{theorem} We have
$$
\varepsilon_2(n,l)= \mathcal{O}(n^{\frac{3l}{4}}) \mbox{ as } l \to \infty.
$$
\end{theorem}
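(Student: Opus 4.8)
The plan is to start from the exact formula supplied by Proposition \ref{Countp=2q-odd}, namely $\varepsilon_2(n,l)=\sum_{d\in\Lambda(l)}\pi_n(3l/d)$, and to estimate the right-hand side term by term. Since $\pi_n(k)$ counts primitive words of length $k$ over an $n$-letter alphabet, we trivially have $\pi_n(k)\le n^{k}$. Because every $d\in\Lambda(l)$ satisfies $d\ge 4$, each exponent obeys $3l/d\le 3l/4$, so every summand is bounded by $n^{3l/4}$. The naive continuation — bounding the sum by (number of terms)$\,\times n^{3l/4}\le\lvert\Lambda(l)\rvert\,n^{3l/4}$ — is precisely the step I would \emph{not} take, since $\lvert\Lambda(l)\rvert$ can be as large as the divisor count of $l$, and the resulting prefactor does not remain bounded as $l\to\infty$. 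Controlling this proliferation of terms is the main obstacle.

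To overcome it, I would exploit the arithmetic structure of the exponents rather than merely their size. For $d\mid l$ the quantity $3l/d=3(l/d)$ is a positive integer, and distinct divisors $d$ produce distinct values of $l/d$, hence distinct exponents $3l/d$. Thus the exponents occurring in the sum form a set of \emph{distinct} positive integers, all lying in $\{1,\dots,\lfloor 3l/4\rfloor\}$. This lets me dominate the sum by a full geometric series, which collapses to a constant multiple of its largest term:
$$\varepsilon_2(n,l)\le\sum_{d\in\Lambda(l)}n^{3l/d}\le\sum_{j=1}^{\lfloor 3l/4\rfloor}n^{j}=\frac{n^{\lfloor 3l/4\rfloor+1}-n}{n-1}\le\frac{n}{n-1}\,n^{3l/4}\le 2\,n^{3l/4},$$
where the last inequality uses $n\ge 2$, so that $\tfrac{n}{n-1}\le 2$, together with $\lfloor 3l/4\rfloor\le 3l/4$.

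The decisive point is that the geometric series reduces to $\mathcal{O}$ of its leading term, which is exactly what neutralizes the uncontrolled number of divisors contributing to $\varepsilon_2(n,l)$. This yields $\varepsilon_2(n,l)=\mathcal{O}(n^{3l/4})$ as $l\to\infty$, with the explicit constant $2$ valid for every $n\ge 2$; the degenerate case $\Lambda(l)=\emptyset$ is trivial, since then $\varepsilon_2(n,l)=0$. I expect no further subtleties, the only genuinely non-routine idea being the replacement of the term-count bound by the distinct-integer-exponents plus geometric-series argument.
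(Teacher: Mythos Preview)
Your proof is correct, and it reaches the same explicit bound $\varepsilon_2(n,l)\le 2\,n^{3l/4}$ as the paper, but by a genuinely different and more elementary route. The paper first dominates the sum by $\sum_{i=1}^{l/\delta(l)}\pi_n(3i)$ and then appeals to the Shyr--Tu lemma (Lemma~\ref{Shyr-Tu}) to establish $\pi_n(m+3)\ge 2\,\pi_n(m)$; this turns the sum into a geometric series in the $\pi_n$-values and yields the sharper intermediate inequality $\varepsilon_2(n,l)\le 2\,\pi_n\!\bigl(3l/\delta(l)\bigr)$ before applying $\pi_n(k)\le n^{k}$. You bypass Shyr--Tu entirely: you use only the trivial bound $\pi_n(k)\le n^{k}$, the observation that the exponents $3l/d$ are \emph{distinct} positive integers not exceeding $\lfloor 3l/4\rfloor$, and the closed form of the geometric series $\sum_{j\le\lfloor 3l/4\rfloor}n^{j}$. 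Your argument therefore needs no combinatorial input on primitive words, at the cost of not isolating the intermediate bound in terms of $\pi_n\!\bigl(3l/\delta(l)\bigr)$; for the $\mathcal{O}$-statement as written, your approach is equally effective and arguably cleaner.
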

\begin{proof}
 According to Proposition \ref{Countp=2q-odd}, we have
$$\varepsilon_2(n,l)=\underset{d\in \Lambda(l)}{\sum}\pi_n \left( \frac{3l}{d} \right) \leq \sum_{1\leq i \leq \frac{l}{\delta(l)}} \pi_n(3i),$$
where $\delta(l)=\min\Lambda(l)\geq4$.
On the other hand, by using Lemma \ref{Shyr-Tu}, we obtain
$$\pi_n(l+3)\geq (n^2-2)\pi_n(l+1)\geq (n^2-2)(n-1)\pi_n(l)\geq 2 \pi_n(l),$$
which yields
$\pi_n(l) \leq \frac{1}{2^i}\pi_n(l+3i),$ for any nonnegative integer $i\geq0$.
Therefore, we get
$$
\varepsilon_2(n,l)\leq \left(1+\frac{1}{2}+\ldots+\frac{1}{2^{\frac{l}{\delta(l)} -1}}\right)\pi_n\left(\frac{3l}{\delta(l)}\right) \leq 2 \pi_n\left(\frac{3l}{\delta(l)}\right)\leq 2n^{\frac{3l}{\delta(l)}}\leq 2n^{\frac{3l}{4}}.
$$
As a result, we have $\varepsilon_2(n,l)=\mathcal{O}(n^{\frac{3l}{4}})$, as $l\to \infty$.
\end{proof}

For some specific values of $l$, we can provide the following asymptotic behavior of $\varepsilon_2(n,l)$.

\begin{proposition}\label{proposition3}If $\delta(l)$ is bounded, then
$$
\varepsilon_2(n,l)\sim n^{\frac{3l}{\delta(l)}}, \textrm{ as }l\to\infty.
$$
\end{proposition}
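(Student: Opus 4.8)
The plan is to isolate the leading term of the finite sum furnished by Proposition~\ref{Countp=2q-odd} and to show that the rest is asymptotically negligible. Fix a bound $M$ with $\delta(l)\le M$ for all the $l$ under consideration (such an $M$ exists by hypothesis). The summand indexed by $d=\delta(l)=\min\Lambda(l)$ is the natural candidate for the dominant contribution, since $3l/d$ is largest at the smallest admissible divisor. Because $\delta(l)\le M$ we have $3l/\delta(l)\ge 3l/M\to\infty$, so Lemma~\ref{lemma4} applies to this term and yields $\pi_n\!\left(\tfrac{3l}{\delta(l)}\right)\sim n^{\frac{3l}{\delta(l)}}$ as $l\to\infty$.

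Next I would control the remaining terms. Every $d\in\Lambda(l)$ with $d\ne\delta(l)$ satisfies $d\ge\delta(l)+1$, hence $\pi_n\!\left(\tfrac{3l}{d}\right)\le n^{\frac{3l}{d}}\le n^{\frac{3l}{\delta(l)+1}}$. Since the number of summands is at most the number of divisors of $l$, and so at most $l$, the tail $\sum_{d\in\Lambda(l),\,d\ne\delta(l)}\pi_n\!\left(\tfrac{3l}{d}\right)$ is bounded above by $l\,n^{\frac{3l}{\delta(l)+1}}$. Dividing by $n^{\frac{3l}{\delta(l)}}$ produces the factor $l\,n^{-\frac{3l}{\delta(l)(\delta(l)+1)}}\le l\,n^{-\frac{3l}{M(M+1)}}$, and since exponential decay in $l$ dominates the polynomial factor $l$ (recall $n\ge 2$), this ratio tends to $0$. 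Thus the tail is $o\!\left(n^{\frac{3l}{\delta(l)}}\right)$, and combining with the first paragraph gives $\varepsilon_2(n,l)=\pi_n\!\left(\tfrac{3l}{\delta(l)}\right)+o\!\left(n^{\frac{3l}{\delta(l)}}\right)\sim n^{\frac{3l}{\delta(l)}}$, as claimed.

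The main obstacle is precisely the tail estimate, and it is here that the boundedness assumption is essential: the crude separation $d\ge\delta(l)+1$ converts the gap $\tfrac1{\delta(l)}-\tfrac1{\delta(l)+1}$ into a \emph{uniform} positive constant $\ge\tfrac1{M(M+1)}$, which is what guarantees a decay rate in the exponent that does not degrade as $l\to\infty$. Without a uniform bound on $\delta(l)$ this exponential margin could collapse, and the leading term need no longer dominate; so the role of the hypothesis is exactly to secure a rate that beats the (at most polynomial) growth in the number of divisors of $l$.
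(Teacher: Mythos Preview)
Your proof is correct and follows essentially the same route as the paper's own argument: split off the term at $d=\delta(l)$, bound the remaining sum by $l\cdot n^{3l/(\delta(l)+1)}$ via the crude estimate $d\ge\delta(l)+1$ and the divisor count, and then use the boundedness of $\delta(l)$ to force the ratio to zero. Your write-up is in fact a bit more explicit than the paper's, which simply asserts that $l\,\pi_n\!\left(\tfrac{3l}{\delta(l)+1}\right)=o\!\left(n^{3l/\delta(l)}\right)$ once $\delta(l)$ is bounded; you have supplied precisely the missing computation with the uniform exponent $\tfrac{1}{M(M+1)}$.
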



\begin{proof}
We have
$$
\varepsilon_2(n,l)=\pi_{n}\left(\frac{3l}{\delta(l)}\right)+\sum_{d\in \Lambda(l)\setminus\{\delta(l)\}}\pi_{n}\left(\frac{3l}{d}\right)\!,
$$
and
$$
\sum_{d\in \Lambda(l)\setminus\{\delta(l)\}}\pi_{n}\left(\frac{3l}{d}\right)
\leq\sum_{d\in \Lambda(l)\setminus\{\delta(l)\}}\pi_{n}\left(\frac{3l}{\delta(l)+1}\right)\leq l\pi_{n}\left(\frac{3l}{\delta(l)+1}\right)\!.
$$
As $\delta(l)$ is bounded, using  Lemma~\ref{lemma4}, one may check easily that $l\pi_{n}\left(\frac{3l}{\delta(l)+1}\right)=o\left( n^{\frac{3l}{\delta(l)}}\right)$, completing the proof.
\end{proof}
\begin{remark}\rm The assumption ``$\delta(l)$ is bounded" is essential in the previous proposition, as illustrated below.
\end{remark}

Before stating further asymptotic behavior of $\varepsilon_2(n,l)$, let us review some facts about gaps between consecutive primes (see \cite{Ford-Tao2,Ford-Tao3,Maynard1,Maynard2}). For any positive integer $n$, we denote by $p_n$ the $n-$th prime and $g(n):=p_{n+1}-p_n$ the gap between consecutive primes.

First, it is quite clear that $\underset{n\longrightarrow \infty}{\limsup}\; g(n)=+\infty$. Indeed, for every positive integer $n$, the interval $[n!+2, n! +n]$ does not contain any prime number.

Small gaps between consecutive primes are related to the ``twin prime conjecture", which states that there are infinitely many primes $p$ such that $p+2$ is also prime. This could also be stated by saying that $g(n)=2$ occurs for infinitely many $n$.

In \cite{Maynard1}, Maynard  gave a significant result proving that there are infinitely many prime gaps smaller than some given constant. More precisely, he showed that
$$g:=\underset{n\longrightarrow \infty}{\liminf} \; g(n)<600.$$ This result was an improvement of that of Zhang \cite{Zhang}.

\begin{proposition}\label{proposition4}
 Consider $l_k=p_kp_{k+1}$ for $k\in\mathbb{N}$. Then
 $$\varepsilon_2(n,l_k)\sim n^{\frac{3l_k}{\delta(l_k)}}\left(n^{-3g(k)}+1\right),\textrm{ as }k\to\infty.$$
\end{proposition}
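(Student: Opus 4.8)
The plan is to make the divisor set $\Lambda(l_k)$ completely explicit, feed it into Proposition~\ref{Countp=2q-odd}, and then read off the two dominant powers of $n$. First I would identify $\Lambda(l_k)$ and $\delta(l_k)$ for all large $k$. For $k\geq 3$ both $p_k$ and $p_{k+1}$ are primes $\geq 5$, so $l_k=p_kp_{k+1}$ is coprime to $3$; consequently every divisor of $l_k$ satisfies $d\not\equiv 0\pmod 3$ automatically, and the defining condition of $\Lambda(l_k)$ collapses to ``$d\mid l_k$ and $d\geq 4$''. The divisors of $l_k$ are $1,p_k,p_{k+1},p_kp_{k+1}$, of which exactly the last three are $\geq 4$; hence $\Lambda(l_k)=\{p_k,p_{k+1},p_kp_{k+1}\}$ and $\delta(l_k)=\min\Lambda(l_k)=p_k$.

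Next I would invoke Proposition~\ref{Countp=2q-odd}. Using $\tfrac{3l_k}{p_k}=3p_{k+1}$, $\tfrac{3l_k}{p_{k+1}}=3p_k$, and $\tfrac{3l_k}{p_kp_{k+1}}=3$, this yields
$$\varepsilon_2(n,l_k)=\pi_n(3p_{k+1})+\pi_n(3p_k)+\pi_n(3).$$
Since $\delta(l_k)=p_k$ gives $\tfrac{3l_k}{\delta(l_k)}=3p_{k+1}$ and $g(k)=p_{k+1}-p_k$, the right-hand side of the claim simplifies to $n^{3l_k/\delta(l_k)}\bigl(n^{-3g(k)}+1\bigr)=n^{3p_{k+1}}+n^{3p_k}$, so it remains to prove $\varepsilon_2(n,l_k)\sim n^{3p_{k+1}}+n^{3p_k}$ as $k\to\infty$.

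For the final step I would expand each factor with the explicit formula $\pi_n(m)=\sum_{d\mid m}\mu(d)\,n^{m/d}$, which follows from $n^m=\sum_{d\mid m}\pi_n(d)$ and the M\"obius Inversion Formula. This gives $\pi_n(3p_{k+1})=n^{3p_{k+1}}-n^{p_{k+1}}-n^3+n$, $\pi_n(3p_k)=n^{3p_k}-n^{p_k}-n^3+n$, and $\pi_n(3)=n^3-n$, whence
$$\varepsilon_2(n,l_k)=n^{3p_{k+1}}+n^{3p_k}-n^{p_{k+1}}-n^{p_k}-n^3+n.$$
Dividing by $n^{3p_{k+1}}+n^{3p_k}$, the subtracted terms contribute $O(n^{-2p_{k+1}})\to 0$, so the ratio tends to $1$. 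Alternatively one may apply Lemma~\ref{lemma4} to write $\pi_n(3p_{k+1})=n^{3p_{k+1}}(1+o(1))$ and $\pi_n(3p_k)=n^{3p_k}(1+o(1))$ as $k\to\infty$, and observe that the combined error is $o(n^{3p_{k+1}}+n^{3p_k})$.

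The one genuine subtlety, which is precisely why this example is worth stating, is that here $\delta(l_k)=p_k$ is \emph{unbounded}, so Proposition~\ref{proposition3} does not apply and one must retain the second leading term $n^{3p_k}$ rather than absorbing it into the error. Because the prime gaps $g(k)$ remain bounded along infinitely many $k$ (Maynard), the factor $n^{-3g(k)}+1$ does not converge to $1$, so $\varepsilon_2(n,l_k)$ is genuinely \emph{not} asymptotic to $n^{3l_k/\delta(l_k)}$ by itself. Carrying both powers $n^{3p_{k+1}}$ and $n^{3p_k}$ through the computation is thus the crux of the argument.
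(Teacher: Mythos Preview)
Your proof is correct and follows essentially the same route as the paper: both obtain the exact identity $\varepsilon_2(n,l_k)=n^{3p_{k+1}}+n^{3p_k}-n^{p_{k+1}}-n^{p_k}-n^3+n$ for $k\geq 3$ and then read off the two-term asymptotic. The only cosmetic difference is that the paper cites Theorem~\ref{theorem} for this closed form, whereas you derive it directly from Proposition~\ref{Countp=2q-odd} together with the M\"obius expansion of $\pi_n$.
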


\begin{proof}For $k\geq3$, according to Theorem \ref{theorem}, we have
$$
\varepsilon_2(n,l_{k})=n^{3p_{k+1}}+n^{3p_k}+n-n^{p_{k+1}}-n^{p_k}-n^3.
$$
Thus
$$
\varepsilon_2(n,l_{k})\sim n^{3p_{k+1}}\left(1+n^{-3(p_{k+1}-p_k)}\right)=n^{\frac{3l_{k}}{\delta(l_{k})}}\left(1+n^{-3g(k)}\right),\textrm{  as  }k\to\infty.$$
\end{proof}

Let $(\varphi(k), k\in \mathbb{N})$ and $(\psi(k), k\in \mathbb{N})$ be two increasing sequences of positive integers such that $$\underset{k\longrightarrow \infty}{\lim}g(\varphi(k))=\infty\textrm{ and }\underset{k\longrightarrow \infty}{\lim}g(\psi(k))=g.$$ As an immediate consequence of the above proposition, we obtain the following.

\begin{corollary}\label{cor-c} As $k$ goes to $\infty$, we have
$$\varepsilon_2(n,l_{\varphi(k)})\sim n^{\frac{3l_{\varphi(k)}}{\delta(l_{\varphi(k)})}}, \; \; \varepsilon_2(n,l_{\psi(k)})\sim n^{\frac{3l_{\psi(k)}}{\delta(l_{\psi(k)})}}\left(n^{-3g}+1\right).$$
\end{corollary}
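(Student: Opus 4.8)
The plan is to obtain both equivalences directly from Proposition~\ref{proposition4} by specializing its conclusion to the two subsequences of indices $(\varphi(k))$ and $(\psi(k))$, and by controlling the correction factor $n^{-3g(k)}+1$ in each case.

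First I would make the equivalence of Proposition~\ref{proposition4} explicit as a statement about the convergence of a ratio. Setting $a_k:=\varepsilon_2(n,l_k)$ and $b_k:=n^{\frac{3l_k}{\delta(l_k)}}$, the proposition asserts that
$$\frac{a_k}{b_k\bigl(n^{-3g(k)}+1\bigr)}\longrightarrow 1\quad\text{as }k\to\infty.$$
Since any subsequence of a sequence that converges to $1$ also converges to $1$, the same ratio tends to $1$ along every increasing sequence of indices; in particular along $(\varphi(k))$ and $(\psi(k))$.

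For the sequence $(\varphi(k))$, I would use $g(\varphi(k))\to\infty$ together with $n\ge 2$ to get $n^{-3g(\varphi(k))}\to 0$, hence $n^{-3g(\varphi(k))}+1\to 1$. Multiplying the ratio $a_{\varphi(k)}/\bigl(b_{\varphi(k)}(n^{-3g(\varphi(k))}+1)\bigr)\to 1$ by the factor $n^{-3g(\varphi(k))}+1\to 1$ yields $a_{\varphi(k)}/b_{\varphi(k)}\to 1$, that is, $\varepsilon_2(n,l_{\varphi(k)})\sim n^{\frac{3l_{\varphi(k)}}{\delta(l_{\varphi(k)})}}$. For the sequence $(\psi(k))$, the limit $g$ is a fixed finite constant, so $g(\psi(k))\to g$ gives $n^{-3g(\psi(k))}+1\to n^{-3g}+1\neq 0$; writing
$$\frac{a_{\psi(k)}}{b_{\psi(k)}\bigl(n^{-3g}+1\bigr)}=\frac{a_{\psi(k)}}{b_{\psi(k)}\bigl(n^{-3g(\psi(k))}+1\bigr)}\cdot\frac{n^{-3g(\psi(k))}+1}{n^{-3g}+1},$$
both factors on the right tend to $1$, so $\varepsilon_2(n,l_{\psi(k)})\sim n^{\frac{3l_{\psi(k)}}{\delta(l_{\psi(k)})}}\bigl(n^{-3g}+1\bigr)$.

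The only delicate point---and essentially the sole obstacle---is justifying that the asymptotic equivalence of Proposition~\ref{proposition4} may be passed to subsequences and that the correction factor may then be split off; once the equivalence is read as the convergence of the explicit ratio above, both steps are elementary. One may further note that $g(\psi(k))$ is integer-valued and converges to the integer $g$, hence equals $g$ for all large $k$, so that $n^{-3g(\psi(k))}+1=n^{-3g}+1$ exactly in the tail, which makes the second computation immediate.
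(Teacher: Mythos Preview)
Your proof is correct and follows exactly the route the paper intends: the corollary is stated as ``an immediate consequence'' of Proposition~\ref{proposition4} with no further argument, and your write-up simply makes explicit the passage to subsequences and the limiting behavior of the factor $n^{-3g(k)}+1$ in each case. Your additional observation that $g(\psi(k))$ is integer-valued and hence eventually equal to $g$ is a nice sharpening, though not strictly needed.
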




\end{document}